%% file: implicit_operations_v1.tex
\documentclass[a4paper,10pt,reqno]{amsart}
\usepackage{amsthm,amsmath,amssymb,amsfonts}
\usepackage[T1]{fontenc}
\usepackage[utf8]{inputenc}
\usepackage{graphicx}
\usepackage{bbold}
\usepackage{xcolor}
\usepackage{caption}
\usepackage{enumitem}
\usepackage{mathtools}
\usepackage{microtype}
\usepackage{stmaryrd}
\usepackage{tikz-cd}
\usepackage{hyperref}
\hypersetup{
    colorlinks = true,
    linkbordercolor = {red},
   	linkcolor ={magenta},
	anchorcolor = {pink},
	citecolor =  {magenta},
	filecolor = {blue},
	menucolor = {blue},
	runcolor =  {blue},
	urlcolor = {magenta},
}

\newtheorem{theorem}{Theorem}[section]
\newtheorem{lemma}[theorem]{Lemma}

\newtheorem{corollary}[theorem]{Corollary}
\newtheorem{claim}[theorem]{Claim}

\theoremstyle{definition}
\newtheorem{definition}[theorem]{Definition}

\theoremstyle{remark}
\newtheorem{remark}[theorem]{Remark}

\input{macros.tex}

\title{On implicit operations and balanced categories}

\author{Luca Reggio}
\address{
Dipartimento di Matematica \textnormal{\emph{Federigo Enriques}}\newline 
Universit\`a degli Studi di Milano\newline 
Milan, Italy
}
\email{luca.reggio@unimi.it}

\begin{document}

\begin{abstract}
The problem of determining whether a category is balanced (i.e., every morphism that is both an epimorphism and a monomorphism is an isomorphism) has been widely investigated, especially in algebra.
For complete categories with an extremal projective generator, we characterise the property of being balanced, and the stronger property that every monomorphism is regular, in terms of implicit operations. These characterisations apply, for example, to any quasiequational class of algebras, possibly with a proper class of function symbols and unbounded arities, that admits free algebras.
\end{abstract}

\maketitle

\section{Introduction}\label{s:introduction}
A category is \emph{balanced} if every morphism that is both an epimorphism and a monomorphism is an isomorphism; equivalently, if every epimorphism is \emph{extremal} (see Section~\ref{s:category-theory} for a definition). We shall refer to the latter as the \emph{\ES~property} (``Epimorphisms are Surjective''), because in many concrete categories, including all quasivarieties of algebras, it coincides with surjectivity of epimorphisms. The problem of determining whether a category is balanced, and more generally of describing the images of epimorphisms, is often difficult. For example, the surjectivity of epimorphisms in the category of planar graphs and edge-preserving functions is essentially equivalent to the Four Colour Theorem \cite{Fawcett1986}. For classes of algebras, this problem has been widely studied, particularly via Isbell's concept of \emph{dominion}. See e.g.\ \cite{Isbell1966,HI1967,Storrer1968} for earlier work on this topic.

Another approach is via implicit operations, which we will now review briefly.
Consider a class of algebras $\K$ in an algebraic signature $\tau$,\footnote{Here, we assume that $\tau$ is a small set of function symbols, each of which has finite arity. These assumptions will be dropped later on.} and a set $I$. An \emph{implicit total operation of arity $I$} in $\K$ is a class $f\coloneqq \{f_{A}\mid A\in \K\}$ consisting of functions $f_{A}\colon A^{I}\to A$ that are preserved by all homomorphisms in $\K$, meaning that
\[
h(f_{A}(a_{i})_{i\in I}) = f_{B}(h(a_{i})_{i\in I})
\]
for every homomorphism of $\K$-algebras $h\colon A\to B$ and every element $(a_{i})_{i\in I}\in A^{I}$. Every $\tau$-term in $n$ variables induces an $n$-ary implicit operation in $\K$, but there may exist implicit operations that are not induced by any term. 

The study of implicit operations has considerably benefited from the categorical approach to universal algebra pioneered by Lawvere and Linton \cite{Lawvere1963,Linton1966}. In fact, an implicit operation of arity $I$ is precisely a natural transformation $f\colon U^{I}\to U$, where $U\colon \K\to\Set$ is the underlying-set functor. 
If $\K$ admits free algebras, $U$ is represented by the free $\K$-algebra on one generator, and the natural transformations $U^{I}\to U$ are in bijective correspondence with the elements of the free $\K$-algebra on the set $I$. Therefore, in this case, every implicit operation in $\K$ is induced by a term.
This functorial viewpoint has been exploited to investigate, e.g., pseudovarieties of finite algebras \cite{RP1981,Reiterman1982}, equational completions of copresheaves~\cite{KG1971}, and the definability of implicit operations \cite{Isbell1973,Hodges1980,Isbell1983,Hebert1990}.

To provide a first criterion for determining when \ES~holds in the category of algebras for a possibly infinitary algebraic theory, Linton employed implicit \emph{partial} operations \cite[Proposition~7]{Linton1966}. An \emph{implicit partial operation of arity~$I$} in~$\K$, written $U^I \rightsquigarrow U$, is a natural transformation $D\to U$, with $D$ a subfunctor of $U^{I}$. 
Later results considered a strengthening of \ES, namely \SES~(the \emph{strong} \ES~property), stating that every monomorphism is regular. 
Consider an implicit partial operation~$f$ in~$\K$ that depends on finitely many variables $\overline{x}$. We say that $f$ is \emph{definable} by a primitive positive formula, i.e.\ one of the form $\Psi(\overline{x},y)\coloneqq\exists z_{1}\cdots\exists z_{k}\Phi$ with $\Phi$ a conjunction of $\tau$-equations, if every algebra $A$ in $\K$ satisfies
\[
\forall \overline{x}\, \forall y \, (f(\overline{x})=y \leftrightarrow \Psi(\overline{x},y))
\]
when $f$ is interpreted as $f_{A}$.
Bacsich showed that if $\K$ is a quasivariety of algebras, or more generally a Horn class of first-order structures,\footnote{A \emph{Horn class} is the class of models of a universal Horn theory. By Maltsev's theorem, they are the elementary classes closed under substructures and Cartesian products \cite{Maltsev1966}. If the signature does not contain relation symbols, Horn classes coincide with quasivarieties of algebras.} then $\K$ satisfies \SES~if, and only if, every implicit partial operation definable by a primitive positive formula is induced by a term \cite[Theorem~4]{Bacsich74}. 

In the 1990s, H{\'e}bert improved upon Bacsich's result by showing that a Horn class satisfies properties \ES~and \SES~precisely when certain implicit partial operations can be extended to implicit total operations \cite[Théorèmes~3.12, 3.13]{Hebert1993}. 
In particular, a quasivariety of algebras satisfies \ES~(respectively, \SES) if, and only if, every implicit partial operation whose domain $D$ preserves limits (respectively, products) can be explicitly defined by a term. 
He then extended these characterisations to classes of models of \emph{limit-theories} in~\cite{Hebert1998}, which are strictly more general than Horn classes and coincide with \emph{locally presentable categories} (cf.\ e.g.\ \cite{GabrielUlmer1971,ar94book}).

Call an implicit partial operation \emph{continuous} if its domain $D$ preserves limits, and \emph{product-preserving} if $D$ preserves products.
In \cite[Theorem~9]{Hebert1998}, H{\'e}bert proved the following characterisations for any category~$\M$ of models of a limit-theory, with morphisms all homomorphisms:
\begin{itemize}
\item $\M$ is balanced if, and only if, every continuous implicit partial operation has ``subobject-closed'' domain (see \cite[Definition~4]{Hebert1998} for a definition).
\item All extremal monos in $\M$ are regular if, and only if, every product-preserving implicit partial operation extends to a continuous implicit partial operation.
\item All extremal epis in $\M$ are surjective if, and only if, every continuous implicit partial operation with subobject-closed domain extends to a total one.
\end{itemize}
Let us mention that, in \emph{op.~cit.}, H{\'e}bert also described the relation between dominions and implicit partial operations at this level of generality; see \cite[Theorem~8]{Hebert1998}.

\medskip
The main result of this paper (Theorem~\ref{t:main}) is a characterisation of properties \ES~and \SES~in terms of implicit operations, which applies to any complete category with a ``good generator'' (namely, an extremal projective generator, see Definition~\ref{def:extremal-projective-generator}). There are two main differences between H{\'e}bert's framework and ours. First, the class of categories we consider has an algebraic flavour, since the existence of a good generator generalises the existence of free algebras. Locally presentable categories need not admit such a generator. On the other hand, a category satisfying our assumptions need not be locally presentable. Second, H{\'e}bert's aforementioned results rely on a concrete presentation of a locally presentable category as a category of models of a limit-theory. In particular, the implicit operations refer to the underlying-set functor of the category of models (or a variation thereof, in the case of multi-sorted limit-theories). By contrast, we work with the representable functor associated with the generator, and our proofs are purely categorical.

Let us say that an implicit partial operation is \emph{representable} (respectively, \emph{principal}) if its domain is a representable functor (respectively, a principal subfunctor, see Definition~\ref{def:principal-subfunctor}). Moreover, recall that a category is \emph{well-powered} (respectively, \emph{well-copowered}) if, up to isomorphism, every object has a small set of subobjects (respectively, of epi-quotients).
Our main result can be stated as follows.

\begin{theorem}\label{t:main}
Let $\cat$ be a complete category admitting an extremal projective generator $G$, and let $V\coloneqq \cat(G,-)\colon \cat\to\Set$. The following statements hold:
\begin{enumerate}[label=(\roman*)]
\item\label{i:ES-char-representable} $\cat$ satisfies \ES~if, and only if, every representable implicit partial operation $V^{I} \rightsquigarrow V$ can be extended to a total one.
\item\label{i:SES-char} $\cat$ satisfies \SES~if, and only if, every principal implicit partial operation $V^{I} \rightsquigarrow V$ can be extended to a total one.
\end{enumerate}
If, in addition, $\cat$ is well-copowered, then the following statement holds:
\begin{enumerate}[label=(\roman*),resume]
\item\label{i:ES-char} $\cat$ satisfies \ES~if, and only if, every continuous implicit partial operation $V^{I} \rightsquigarrow V$ can be extended to a total one.
\end{enumerate}
\end{theorem}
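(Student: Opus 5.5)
The plan is to work throughout with the representable functor $V=\cat(G,-)$ and translate each property of implicit operations into a factorisation property of morphisms out of $G$. The first task is to make total operations concrete. I would show that the copowers $I\cdot G$ exist in $\cat$, so that $V^{I}\cong\cat(I\cdot G,-)$. By Yoneda, total operations $V^{I}\to V$ then correspond to morphisms $G\to I\cdot G$. I expect this to be the main obstacle, since only completeness is assumed. My first attempt would build $I\cdot G$ as an intersection of subobjects of a large product, in the style of the special adjoint functor theorem. If that fails, I would argue directly with natural transformations $V^{I}\to V$, using only that $G$ is an extremal projective generator.

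For \ref{i:ES-char-representable}, a representable domain $D\cong\cat(A,-)\hookrightarrow V^{I}$ is given by a tuple $(a_i\colon G\to A)_{i\in I}$. Injectivity of the inclusion says exactly that the $a_i$ are jointly epimorphic, i.e.\ that $e\colon I\cdot G\to A$ is an epimorphism. A partial operation on $D$ is an element $b\colon G\to A$, and it extends to a total operation if and only if $b$ factors through $e$.
\begin{itemize}
\item If \ES~holds, then $e$ is extremal. Since $G$ is projective with respect to extremal epimorphisms, $b$ lifts along $e$, which gives the extension.
\item Conversely, let $m\colon B\to A$ be both mono and epi. Take $I\coloneqq V(B)$ and $a_i\coloneqq m\circ i$. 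The canonical map $V(B)\cdot G\to B$ is an extremal epi because $G$ is an extremal generator, so the composite with $m$ is epi and the tuple is jointly epimorphic. The extension hypothesis then says that every $G\to A$ factors through $m$. Hence the extremal epi $V(A)\cdot G\to A$ factors through $m$, so $m$ is an extremal epi. Being also mono, $m$ is an isomorphism, and therefore $\cat$ is balanced.
\end{itemize}

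For \ref{i:SES-char}, a principal subfunctor is the image of $\cat(A,-)\to V^{I}$ for a tuple $(a_i)$ that need not be jointly epimorphic. I would factor $e\colon I\cdot G\to A$ as an extremal epi $I\cdot G\to S$ followed by a mono $m\colon S\to A$. A natural operation on the principal domain should then correspond to an element $G\to A$ lying in the dominion of $m$, that is, equalised by the cokernel pair of $m$. Such an operation extends to a total one exactly when the element factors through $S$. Thus extension for all principal operations says that every mono equals its dominion, i.e.\ that every mono is the equaliser of its cokernel pair, which is \SES. To go back from "$G$-points factor through $m$" to an isomorphism, I would again use that $G$ is an extremal projective generator, as in the previous paragraph.

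For \ref{i:ES-char}, representable functors preserve limits, so the condition in \ref{i:ES-char-representable} is implied by the condition in \ref{i:ES-char}; this gives the "if" direction. For "only if", I would show that a continuous domain $D\subseteq V^{I}$ is representable and then apply \ref{i:ES-char-representable}. Since $\cat$ is complete and $D$ preserves limits, it suffices to find a solution set for $D$. Every element of $D(X)$ is a tuple of maps $G\to X$, which factors through the extremal image of $I\cdot G\to X$. These images are extremal quotients of $I\cdot G$, and well-copoweredness makes them form a set up to isomorphism. Freyd's representability criterion then shows that $D$ is representable.
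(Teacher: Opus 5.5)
\textbf{The main gap is in the ``only if'' half of (iii), at the solution-set step.} Freyd's criterion for the subfunctor $D\subseteq V^{I}$ requires, for each $x\in D(X)$, an object $s$ from a fixed set, an element $y\in D(s)$ (not merely $y\in V^{I}(s)$), and an arrow $f\colon s\to X$ with $f\circ y=x$. Factoring $x\colon\coprod_I G\to X$ as $x=m\circ e$ through its extremal image $m\colon S'\emb X$ only gives $e\in V^{I}(S')$. Nothing in your argument puts $e$ in $D(S')$, and in general it is not there.

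For example, let $\delta\colon\coprod_I G\to A$ be a non-extremal epimorphism, and let $D=\yo(A)$ be embedded via $\yo(\delta)$; this is a continuous domain. Factor $\delta=m\circ e$ with $e$ an extremal epi and $m$ a mono. Suppose $e=h\circ\delta$ for some $h$. Then $m\circ h\circ\delta=\delta$, so $m\circ h=\id_A$ because $\delta$ is epic. Hence $m$ would be an isomorphism and $\delta$ extremal, a contradiction. So extremal images do not give a solution set for $D$, and your representability argument, which never uses \ES{}, fails. In the ``only if'' direction \ES{} is available, and then the extremal image of $x$ does carry an element of $D$. But proving this requires exactly the argument you are missing.

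That argument is Lemma~\ref{l:subfunctor-of-representable}, and it is where continuity of $D$ is really used.
\begin{enumerate}
\item Given $x\in D(X)$, consider all subobjects $u\emb X$ through which $x$ factors via an element of $D(u)$. They form a set by well-poweredness, and the family is nonempty since $X$ itself qualifies. Let $s$ be their intersection.
\item Since $D$ preserves wide pullbacks, the induced arrow $y\colon\coprod_I G\to s$ lies in $D(s)$.
\item If $u\circ y=v\circ y$, preservation of equalisers shows that $y$ factors through the equaliser $E\emb s$ of $u,v$ via an element of $D(E)$. So $E$ belongs to the family, minimality gives $E=s$, and hence $u=v$. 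Thus $y$ is an epimorphism.
\end{enumerate}
The correct solution set is therefore a set of representatives of the \emph{epimorphic} quotients of $\coprod_I G$, which is exactly what well-copoweredness supplies. Continuous domains are then representable, and (iii) follows from (i) as you planned.

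\textbf{Parts (i) and (ii).} Your (i) is essentially the paper's argument. Your (ii) has the same structure as the paper's proof, with one point to repair: you use the cokernel pair of $m$, but only completeness is assumed, so its existence would need proof. The paper avoids it. It quantifies over all pairs $(\alpha,\beta)$ with $\alpha\circ m=\beta\circ m$, and replaces ``the equaliser of the cokernel pair'' by the intersection $E$ of their equalisers. This intersection is set-indexed by Lemma~\ref{l:basic-props-complete-gen}, and it is regular, being the equaliser of the induced pair into a product. Your extremal-generator argument then gives $m\cong E$ unchanged.

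Finally, the existence of $\coprod_I G$ is not an obstacle: admitting small copowers is part of the definition of an extremal generator.
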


It is worth noting that any well-copowered category that satisfies the assumptions of Theorem~\ref{t:main} is cocomplete (see Remark~\ref{r:cocompleteness-from-gen}).
Furthermore, while item~\ref{i:ES-char} in Theorem~\ref{t:main} is a direct generalisation of the aforementioned result for quasivarieties of algebras, item~\ref{i:SES-char} concerns principal implicit partial operations rather than product-preserving ones. Principal implicit partial operations are product-preserving, but the converse does not hold in general.

\medskip
We shall now give a few examples of categories that satisfy the assumptions of Theorem~\ref{t:main}. 
Consider any category monadic over $\Set$. Up to equivalence, these are precisely the equational classes of algebras, possibly with a proper class of function symbols and unbounded arities, that admit free algebras. Cf.\ \cite[Proposition~I.3.8]{Johnstone1982}. 
Their (regular epi)-reflective subcategories correspond to the quasiequational classes of algebras that admit free algebras, again with the same proviso concerning possibly large sets of function symbols and unbounded arities. An example of equational class of algebras that does not admit free algebras is the class of complete Boolean algebras~\cite{Gaifman1964,Hales1964}.

We refer to (quasi)equational classes of algebras as \emph{generalised (quasi)varieties}. (This is a slight abuse of language, because Birkhoff's Variety Theorem holds for large signatures if, and only if, the universe of small sets is not measurable~\cite{AT2005}.)
Any generalised (quasi)variety of algebras that admits free algebras is complete and has an extremal projective generator, namely the free algebra on one generator.
Therefore, it satisfies items~\ref{i:ES-char-representable}--\ref{i:SES-char} of Theorem~\ref{t:main}. This applies, e.g., to the category

\begin{itemize}
	\item[$\Frm$:] frames and frame homomorphisms.  This category is monadic over $\Set$, but it is not well-copowered (see e.g.\ \cite[\S IV.6.6]{PP2012}).
\end{itemize}

If there is a small set of function symbols, then the supremum of their cardinalities exists, and the generalised (quasi)variety is locally presentable, hence well-copowered \cite[Theorem~1.58]{ar94book}. The following are examples of well-copowered categories that satisfy the assumptions of Theorem~\ref{t:main} but are not locally presentable.\footnote{A useful criterion for proving that a category is \emph{not} locally presentable is as follows: if both $\cat$ and $\cat^{\op}$ are locally presentable, then $\cat$ is equivalent to a poset. See \cite[Satz~7.13]{GabrielUlmer1971} or \cite[Theorem~1.64]{ar94book}.}

\begin{itemize}
\item[$\CABA$:] complete atomic Boolean algebras and complete homomorphisms. It is dual to $\Set$, which is cocomplete, well-powered, and admits an extremal injective cogenerator (the dual of an extremal projective generator), namely $\{0,1\}$.
\item[$\CH$:] compact Hausdorff spaces and continuous maps. It is monadic over $\Set$, see \cite{Manes1976}, and it is well-copowered because epimorphisms are surjective.
\item[$\Stone$:] Stone spaces ($0$-dimensional compact Hausdorff spaces) and continuous maps. It is dual to the category $\BA$ of Boolean algebras and their homomorphisms~\cite{Stone1936}, which satisfies the dual properties. In particular, the two-element Boolean algebra is an extremal injective cogenerator in $\BA$.
\item[$\Top^\op$:] the opposite of the category of topological spaces and continuous maps. It is equivalent to a generalised quasivariety of algebras~\cite{BP1995}, and it is well-copowered because $\Top$ is well-powered.
\item[$\CSLat$:] complete join-semilattices and complete homomorphisms. It is monadic over $\Set$ (in fact, it is the category of algebras for the covariant power-set monad), and it is well-copowered because it is well-powered and self-dual.
\end{itemize}

In the examples above, the relevant implicit operations have arbitrary arities. However, for \emph{locally finitely presentable} categories with the amalgamation property and an extremal projective generator, properties \ES~and \SES~are equivalent, and can be characterised in terms of \emph{finitary} implicit operations (Theorems~\ref{t:WES-to-SES} and~\ref{t:SES-vs-implicit-operations-with-amalgam}).

\medskip
Beyond detecting when \ES~or \SES~holds, implicit operations have also been used to ``repair'' its failure. Given a quasivariety that does not satisfy \SES, one can seek a companion quasivariety that does, called a \emph{Beth companion}~\cite{CKM2026}. A categorical approach to Beth companions of locally finitely presentable categories, enforcing either \ES~or \SES, is developed in~\cite{DLR2026}.

\subsubsection*{Structure of the paper} In Section~\ref{s:preliminaries}, we outline the necessary preliminaries concerning category theory, and the properties \ES~and \SES. In Section~\ref{s:implicit-operations-defined}, we define the notion of implicit partial operations that will be employed throughout. Items~\ref{i:ES-char-representable} and~\ref{i:ES-char} in Theorem~\ref{t:main} are proved in Section~\ref{s:implicit-ES}, while item~\ref{i:SES-char} is proved in Section~\ref{s:implicit-SES}. In Section~\ref{s:lfps-with-AP}, we study locally finitely presentable categories with the amalgamation property, in order to establish a reduction to finitary implicit operations in Section~\ref{s:finitary-implicit-partial-operations}.

\section{Preliminaries}\label{s:preliminaries}

\subsection{Category theory}\label{s:category-theory}
All categories under consideration are assumed to be locally small.
A category $\cat$ is \emph{well-powered} if, for every object $X$ of $\cat$, the collection of subobjects of $X$ is a set (as opposed to a proper class). Moreover, $\cat$ is \emph{well-copowered} if $\cat^{\op}$ is well-powered.

Recall that an epimorphism $e$ is an \emph{extremal epimorphism} provided that, for any decomposition $e= m\circ f$, if $m$ is monic then it is an isomorphism. \emph{Extremal monomorphisms} are defined dually. We shall consider the following notion of a generator (sometimes called a ``strong generator'', cf.\ e.g.\ \cite{ar94book}).

\begin{definition}
An object $G$ of a category $\cat$ is an \emph{extremal generator} if it admits small copowers (that is, for every set $I$, the $I$-fold copower $\coprod_I{G}$ of $G$ exists in $\cat$) and, for every object $X$ of $\cat$, the canonical arrow
\[
\coprod_{\cat(G,X)}{G} \to X
\]
is an extremal epimorphism.
\end{definition}

If $G$ is an extremal generator, the following property holds:
\begin{itemize}
\item for each object $X$, and each proper subobject $m\colon S\emb X$, there exists an arrow $G\to X$ that does not factor through $m$.
\end{itemize}
In fact, assuming that $G$ admits set-indexed copowers, the latter property is equivalent to saying that $G$ is an extremal generator. 

\begin{definition}
An object $A$ of a category $\cat$ is \emph{extremal projective} if the functor $\cat(A,-)\colon \cat \to \Set$ preserves extremal epimorphisms. 
\end{definition}

Because extremal epimorphisms in $\Set$ are precisely the surjections, $A$ is extremal projective precisely when it satisfies the following lifting property: for every extremal epimorphism $e\colon Y\to X$, and every arrow $f\colon A\to X$, there exists an arrow $g\colon A\to Y$ such that $f = e\circ g$.

\begin{definition}\label{def:extremal-projective-generator}
An object of a category is an \emph{extremal projective generator} if it is both extremal projective and an extremal generator.
\end{definition}

\begin{lemma}\label{l:basic-props-complete-gen}
The following statements hold in any complete category $\cat$ that admits an extremal generator:
\begin{enumerate}[label=(\roman*)]
\item\label{i:well-powered} $\cat$ is well-powered;
\item\label{i:factorisations} $\cat$ has (extremal epi, mono) factorisations.
\end{enumerate}
\end{lemma}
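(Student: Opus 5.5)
Throughout, $G$ denotes the extremal generator of $\cat$. The plan is to prove \ref{i:well-powered} by injecting subobjects into a set, and then to build the factorisations in \ref{i:factorisations} as intersections of subobjects. The intersections exist by completeness, and well-poweredness keeps them small.

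For \ref{i:well-powered}, fix an object $X$. To a subobject $m\colon S\emb X$ assign the set $\Phi(m)\subseteq \cat(G,X)$ of arrows $G\to X$ that factor through $m$. This assignment does not depend on the representative of $m$, so it is a map from subobjects of $X$ to the power set of $\cat(G,X)$. I claim it is injective. Suppose $m\colon S\emb X$ and $n\colon T\emb X$ satisfy $\Phi(m)=\Phi(n)$. Form the pullback $p\colon P\emb X$ of $m$ and $n$, which is a mono contained in both. Every arrow $G\to X$ in $\Phi(m)=\Phi(n)$ factors through both $m$ and $n$, and hence through $p$ by the universal property of the pullback. Now look at $P\emb S$. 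Any arrow $g\colon G\to S$ gives $m\circ g\in\Phi(m)$, which factors through $p$; since $m$ is monic, $g$ itself factors through $P\emb S$. By the property of extremal generators recorded above, $P\emb S$ cannot be proper, so it is an isomorphism. Symmetrically $P\emb T$ is an isomorphism, and therefore $m$ and $n$ represent the same subobject. Since $\cat$ is locally small, $\cat(G,X)$ is a set, so the subobjects of $X$ form a set. This step is routine.

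For \ref{i:factorisations}, let $f\colon A\to X$ be an arrow. Consider all subobjects $m_k\colon S_k\emb X$ through which $f$ factors. By \ref{i:well-powered} this is a set of subobjects (choose representatives), so completeness gives their wide pullback $m\colon S\emb X$. A wide pullback of monos is a mono. The factorisations of $f$ through the $m_k$ are compatible, because the $m_k$ are monic, so $f=m\circ e$ for a unique $e\colon A\to S$.

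The remaining work is to show that $e$ is an extremal epimorphism, and this is the step I expect to need the most care. Extremality goes as follows. Suppose $e=n\circ g$ with $n\colon T\emb S$ monic. Then $f=(m\circ n)\circ g$, so $m\circ n$ is a subobject of $X$ through which $f$ factors. By construction $m$ is the least such subobject, so $m$ factors through $m\circ n$. It follows that $n$ is a split epi, and since $n$ is monic it is an isomorphism.

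It is also necessary that $e$ is an epimorphism at all. The equaliser argument handles this. Suppose $u\circ e=v\circ e$. Let $k\colon E\emb S$ be the equaliser of $u$ and $v$. Then $e$ factors through $k$, so the same minimality argument shows $k$ is an isomorphism, and hence $u=v$. (In fact extremality alone already implies the epi property, via this equaliser argument.) This gives the (extremal epi, mono) factorisation of $f$.
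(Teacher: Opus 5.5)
Your proof is correct and follows essentially the same route as the paper. The paper does not argue directly but cites two general facts: a category with finite limits and a strong generating set is well-powered, and a complete well-powered category has (extremal epi, mono) factorisations. The standard proofs of those facts are your two arguments: subobjects are told apart, via pullbacks, by which arrows from the generator factor through them, and the factorisation of $f$ comes from the intersection of all subobjects through which $f$ factors.
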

\begin{proof}
Item~\ref{i:well-powered} is a particular case of a more general fact: any category with finite limits and a \emph{strong generating set} is well-powered. See e.g.\ \cite[\S A1.4, Remark~1.4.17]{Elephant1} or \cite[Proposition~4.5.15]{Borceux1}.
Item~\ref{i:factorisations} holds for any well-powered complete category; see e.g.\ \cite[Propositions~4.3.7, 4.4.2 and~4.4.3]{Borceux1}.
\end{proof}

\begin{remark}\label{r:cocompleteness-from-gen}
If $\cat$ is a well-copowered complete category admitting an extremal generator, then it is cocomplete. Just observe that $\cat$ is well-powered by Lemma~\ref{l:basic-props-complete-gen}\ref{i:well-powered}, and every complete, well-powered and well-copowered category that admits an extremal generator is cocomplete by \cite[Exercise~12H]{ahs06book}.
\end{remark}

For any category $\cat$, write $\CP{\cat}$ for the category of \emph{small copresheaves} on~$\cat$. Its objects are the functors $\cat\to \Set$ that are small colimits of representable functors, and its arrows are the natural transformations. The category $\CP{\cat}$ is indeed locally small, and the Yoneda embedding
\[
\yo\colon \cat^{\op}\to \CP{\cat}, \ \ A\mapsto \yo(A)\coloneqq \cat(A,-)
\]
embeds $\cat$ as a full subcategory of $\CP{\cat}$.

\subsection{Balancedness-type properties}\label{s:definability-props}
It is a simple observation that the following conditions are equivalent for any category $\cat$:
\begin{enumerate}[label=(\arabic*)]
\item $\cat$ is balanced;
\item every monomorphism in $\cat$ is extremal;
\item\label{i:every-epi-is-extremal} every epimorphism in $\cat$ is extremal.
\end{enumerate}
See e.g.\ \cite[Proposition~7.67]{ahs06book} for a proof.

If $\tau$ is an algebraic signature, the extremal epimorphisms in the category $\Alg{\tau}$ of $\tau$-algebras and homomorphisms between them are precisely the surjective homomorphisms. This property is inherited by any full subcategory that is closed in $\Alg{\tau}$ under binary products and subalgebras.
For this reason, in the universal algebra literature, property~\ref{i:every-epi-is-extremal} above is referred to as the \emph{\ES~property} (``Epimorphisms are Surjective'').
We record this notion, along with a strengthening of it:

\begin{definition}
A category $\cat$ is said to have the
\begin{enumerate}
\item[(\ES)]\label{ES} \emph{ES property} if every epimorphism in $\cat$ is an extremal epimorphism;
\item[(\SES)]\label{SES} \emph{strong ES property} if every monomorphism in~$\cat$ is a regular monomorphism.
\end{enumerate}
\end{definition}

Note that $\SES\IMP\ES$ because, in any category, every regular monomorphism is an extremal monomorphism.

\section{Implicit operations}\label{s:implicit-operations-defined}

Fix an arbitrary category $\cat$. The following notion of implicit partial operation is a straightforward generalisation of the one outlined in the Introduction.

\begin{definition}
Let $V\colon \cat\to\Set$ be a functor, and $I$ a set. A \emph{$V$-implicit partial operation of arity $I$} (or \emph{implicit partial operation}, for short) is a span of the form 
\[\begin{tikzcd}
& D \arrow[rightarrowtail]{dl}[swap]{d} \arrow{dr}{f} & \\
V^{I} & & V
\end{tikzcd}\]
where $d\colon D\rightarrowtail V^{I}$ is a subfunctor, called the \emph{domain} of~$f$. If there is no need to specify its domain, we will use the notation $f\colon V^{I} \rightsquigarrow V$. We say that $f$ is \emph{total} if $d$ is the identity transformation. Thus, the $V$-implicit total operations of arity $I$ are precisely the natural transformations $V^{I}\to V$. 
\end{definition}

\begin{remark}\label{rmk:partial-total-ops}
Suppose that $V$ is represented by an object $G$ of $\cat$. The fact that the Yoneda embedding is fully faithful then entails that $V$-implicit total operations of arity $I$ are in bijective correspondence with $\cat$-arrows $G \to \coprod_{I}{G}$. 

For example, if $\cat$ is a quasivariety of algebras and $U$ is the underlying-set functor, the $U$-implicit total operations of arity $I$ are in bijection with the elements of the free algebra on $I$ (which coincides with the $I$-fold copower of the free algebra on one generator). If $I$ is a finite set of cardinality $n$, these can be identified with equivalence classes of terms in $n$ variables in the language of the quasivariety.
\end{remark}

An implicit partial operation $f\colon V^{I}\rightsquigarrow V$ with domain $d\colon D\rightarrowtail V^{I}$ \emph{can be extended to a total one} if there is a natural transformation $g\colon V^{I}\to V$ making the following diagram commute.
\[\begin{tikzcd}
& D \arrow[rightarrowtail]{dl}[swap]{d} \arrow{dr}{f} & \\
V^{I} \arrow{rr}{g} & & V
\end{tikzcd}\]

\section{\ES~and representable implicit partial operations}\label{s:implicit-ES}

\begin{definition}
An implicit partial operation $V^{I} \rightsquigarrow V$ with domain $D\rightarrowtail V^{I}$ is \emph{representable} if the functor $D$ is representable.
\end{definition}

The next result establishes item~\ref{i:ES-char-representable} of Theorem~\ref{t:main}.

\begin{theorem}\label{t:ES-vs-implicit-operations}
Let $\cat$ be a complete category with an extremal projective generator~$G$, and let $V\coloneqq \cat(G,-)\colon\cat\to\Set$. The following statements are equivalent:
\begin{enumerate}[label=(\arabic*),series=implicitops]
\item\label{i:Cat-satisfies-ES} $\cat$ satisfies \ES~(equivalently, $\cat$ is balanced);
\item\label{i:representable-admits-extension} every representable implicit partial operation $V^{I} \rightsquigarrow V$ can be extended to a total one.
\end{enumerate}
\end{theorem}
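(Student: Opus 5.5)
The plan is to translate condition~\ref{i:representable-admits-extension} into a purely arrow-theoretic statement in $\cat$ using the Yoneda lemma, and then compare it with \ES. Since $V=\cat(G,-)$ and $\cat$ has copowers of $G$, we have $V^{I}\cong\cat(\coprod_I G,-)$. A representable subfunctor $D\cong\cat(A,-)\rightarrowtail\cat(\coprod_I G,-)$ is, by full faithfulness of $\yo$, of the form $\cat(e,-)$ for a unique arrow $e\colon\coprod_I G\to A$. Such a transformation is a subfunctor exactly when each component $h\mapsto h\circ e$ is injective, that is, when $e$ is an epimorphism.

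Likewise, a natural transformation $f\colon\cat(A,-)\to\cat(G,-)$ is $\cat(u,-)$ for a unique $u\colon G\to A$, and a total operation $g\colon V^{I}\to V$ is $\cat(v,-)$ for a unique $v\colon G\to\coprod_I G$ (Remark~\ref{rmk:partial-total-ops}). By faithfulness of $\yo$, the commutativity $g\circ d=f$ amounts to $e\circ v=u$. So condition~\ref{i:representable-admits-extension} is equivalent to the following:
\begin{quote}
$(\ast)$ for every set $I$, every epimorphism $e\colon\coprod_I G\to A$ and every arrow $u\colon G\to A$, the arrow $u$ factors through $e$.
\end{quote}
Establishing this dictionary carefully (variance, and that subfunctor means monic, i.e.\ componentwise injective, transformation) is the main bookkeeping step. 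I expect it to be the only real obstacle.

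For \ref{i:Cat-satisfies-ES}$\Rightarrow$\ref{i:representable-admits-extension}: under \ES, the epimorphism $e$ in $(\ast)$ is extremal. Since $G$ is extremal projective, every $u\colon G\to A$ lifts along $e$, which is exactly $(\ast)$.

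For \ref{i:representable-admits-extension}$\Rightarrow$\ref{i:Cat-satisfies-ES}: I will show the category is balanced. Let $m\colon S\to X$ be both monic and epic. Take the canonical extremal epimorphism $c\colon\coprod_{V(S)}G\to S$, which exists because $G$ is an extremal generator. Then $m\circ c$ is an epimorphism, as a composite of epimorphisms. Applying $(\ast)$ with $I=V(S)$ and $e=m\circ c$, every arrow $G\to X$ factors through $m\circ c$, and hence through $m$. If $m$ were a proper subobject, the characterisation of extremal generators recalled after their definition would give an arrow $G\to X$ not factoring through $m$. Hence $m$ is an isomorphism, so $\cat$ is balanced, equivalently satisfies \ES.

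Completeness is not really needed beyond guaranteeing that copowers and the relevant structure exist.
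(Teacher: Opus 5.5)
Your proposal is correct and follows essentially the same route as the paper. Both proofs use the Yoneda dictionary, which turns representable partial operations into epimorphisms $\coprod_I G\to A$ paired with arrows $G\to A$. Both use extremal projectivity of $G$ for (1)$\Rightarrow$(2), and precomposition with the canonical extremal epimorphism $\coprod_{V(S)} G\to S$ for the converse.

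The only difference is in how the converse is finished. You conclude balancedness directly from the ``no proper subobject absorbs every arrow from $G$'' form of the extremal-generator property. The paper instead isolates the per-epimorphism Claim~\ref{claim:ext-epi-iff-total}, proved via a mediating arrow $\iota$ with $\delta\circ\iota=\epsilon$, and reuses it later for the finitary version in Section~\ref{s:finitary-implicit-partial-operations}.
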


\begin{proof}
Before starting, let us mention in passing that the implication \ref{i:Cat-satisfies-ES} $\IMP$ \ref{i:representable-admits-extension} holds even when $G$ is not an extremal generator, and the implication \ref{i:representable-admits-extension} $\IMP$ \ref{i:Cat-satisfies-ES} holds even when $G$ is not extremal projective. 

We show that both items~\ref{i:Cat-satisfies-ES} and~\ref{i:representable-admits-extension} are equivalent to the following: 
\begin{enumerate}[label=(\arabic*),resume=implicitops]
\item\label{i:ES-at-copowers-of-G} every epimorphism in $\cat$ whose domain is a small copower of $G$ is an extremal epimorphism.
\end{enumerate}

Clearly, \ref{i:Cat-satisfies-ES} $\IMP$ \ref{i:ES-at-copowers-of-G}. Conversely, suppose that $\delta\colon A\to B$ is an epimorphism in $\cat$. Let~$I$ be the set $\cat(G,A)$, and consider the canonical arrow $\epsilon\colon \coprod_{I}G \to A$. The latter is an (extremal) epimorphism because $G$ is an extremal generator. The composite $\delta\circ \epsilon \colon \coprod_{I}G \to B$ is an epimorphism whose domain is a small copower of $G$ and thus, by assumption, it is an extremal epimorphism. It follows easily that $\delta$ is an extremal epimorphism too.

It remains to show that \ref{i:representable-admits-extension} $\IFF$ \ref{i:ES-at-copowers-of-G}. We start by proving the following fact:
\begin{claim}\label{claim:ext-epi-iff-total}
Let $I$ be a set and let $\delta\colon \coprod_{I}G \to A$ be an epimorphism in $\cat$. Then~$\delta$ is an extremal epimorphism if, and only if, every representable implicit partial operation $V^{I} \rightsquigarrow V$ with domain $\yo(\delta)\colon \yo(A)\rightarrowtail V^{I}$ can be extended to a total one.
\end{claim}
\begin{proof}[Proof of Claim]
Assume that $\delta$ is an extremal epimorphism and let $f\colon \yo(A)\to V$ be a representable implicit partial operation with domain $\yo(\delta)\colon \yo(A)\rightarrowtail V^{I}$. Since the Yoneda embedding is full, there is an arrow $\phi\colon G\to A$ such that $\yo(\phi)=f$. Because $G$ is extremal projective, there exists an arrow $\psi\colon G\to \coprod_{I}{G}$ such that $\delta\circ \psi = \phi$. The implicit total operation $\yo(\psi)\colon V^{I}\to V$ satisfies $\yo(\psi)\circ \yo(\delta) = f$, i.e.\ it extends $f$, as was to be proved.

Conversely, suppose that every representable implicit partial operation $V^{I} \rightsquigarrow V$ with domain $\yo(\delta)\colon \yo(A)\rightarrowtail V^{I}$ can be extended to a total one. That is, for every natural transformation $f\colon \yo(A)\to V$ there exists $g\colon V^{I}\to V$ such that $g\circ \yo(\delta) = f$. Therefore, in $\cat$, for every $\phi\colon G\to A$ there exists $\psi\colon G\to \coprod_{I}G$ such that $\delta\circ \psi = \phi$. 

Let $J$ be the set $\cat(G,A)$, and let $\epsilon\colon \coprod_{J}G \to A$ be the canonical arrow. Since $G$ is an extremal generator, $\epsilon$ is an extremal epimorphism. For each $\phi\colon G \to A$, choose $\psi_{\phi}\colon G\to \coprod_{I}G$ such that $\delta\circ \psi_{\phi} = \phi$. The cocone $\{\psi_{\phi}\mid \phi\in J\}$ induces a unique mediating morphism $\iota\colon \coprod_{J}G \to \coprod_{I}G$, which satisfies $\delta\circ \iota = \epsilon$ by construction. As~$\epsilon$ is an extremal epimorphism, we conclude that $\delta$ is an extremal epimorphism. 
\end{proof}
It follows at once from the latter claim that \ref{i:representable-admits-extension} $\IMP$ \ref{i:ES-at-copowers-of-G}.
For the converse implication, let $f\colon V^{I}\rightsquigarrow V$ be an implicit partial operation whose domain $d\colon D\rightarrowtail V^{I}$ is represented by an object $A$ of $\cat$. 
Thus, the span in the copresheaf category $\CP{\cat}$ displayed on the left-hand side below corresponds to a cospan in $\cat$ as on the right-hand side below.
\begin{equation*}
\begin{tikzcd}
& D \arrow[rightarrowtail]{dl}[swap]{d} \arrow{dr}{f} & \\
V^{I} & & V
\end{tikzcd}
\ \ \ \ \ \ 
\begin{tikzcd}
& A  & \\
\coprod_{I}{G} \arrow{ur}{\delta} & & G \arrow{ul}[swap]{\phi}
\end{tikzcd}
\end{equation*}
The Yoneda embedding $\cat^{\op}\to \CP{\cat}$ reflects monomorphisms because it is faithful, so $\delta$ is an epimorphism in $\cat$. Item~\ref{i:ES-at-copowers-of-G}, combined with Claim~\ref{claim:ext-epi-iff-total}, implies that $f\colon V^{I}\rightsquigarrow V$ can be extended to an implicit total operation, as was to be~proved.
\end{proof}

\begin{remark}\label{rm:profinite-integers}
	In item~\ref{i:representable-admits-extension} of Theorem~\ref{t:ES-vs-implicit-operations}, the requirement that the implicit partial operations be representable cannot be omitted. Consider, e.g., the underlying-set functor $U\colon \AGrp\to \Set$ on the category of Abelian groups. We claim that, even though $\AGrp$ is balanced, not every (unary) implicit partial operation $U\rightsquigarrow U$ can be extended to a total one.
	
	Let $D\colon \AGrp\to \Set$ denote the functor sending an Abelian group to its set of torsion elements.
	While $D$ is a non-representable subfunctor of $U$, it is ind-representable, in the following sense. Write $(\N,\mid)$ for the poset of natural numbers equipped with the divisibility order. For any Abelian group $G$, we have
	\begin{align*}
	D(G) &= \{x\in G\mid nx=0 \text{ for some } n\in \N\} \\
	&= \bigcup_{n\in \N}{\{x\in G\mid nx=0\}} \\
	&\cong \colim_{n\in (\N,\mid)} \AGrp(\Z/n\Z,G).
	\end{align*}
	In fact, $D$ is the directed colimit in $\CP{\AGrp}$ of the representable functors $\yo(\Z/n\Z)$. Therefore,
	\begin{align*}
	\CP{\AGrp}(D,U) &\cong \CP{\AGrp}(\colim_{n\in (\N,\mid)}{\yo(\Z/n\Z)},U) \\
	&\cong \lim_{n\in (\N,\mid)}{\CP{\AGrp}(\yo(\Z/n\Z),U)} \\
	&\cong \lim_{n\in (\N,\mid)}{U(\Z/n\Z)} \\
	&\cong U(\widehat{\Z}),
	\end{align*}
	where $\widehat{\Z}$ is the group of profinite integers. Hence, the implicit partial operations $U\rightsquigarrow U$ with domain $D$ are in one-to-one correspondence with the profinite integers. In contrast, the implicit total operations $U\to U$ are in one-to-one correspondence with the integers (cf.\ Remark~\ref{rmk:partial-total-ops}). Therefore, each element of the remainder $\widehat{\Z}\setminus \Z$, which is an uncountable set, corresponds to an implicit partial operation $U\rightsquigarrow U$ that cannot be extended to a total one.
\end{remark}

In the remainder of this section we show that, if $\cat$ is well-copowered, then representable implicit partial operations can be replaced by continuous ones.

\begin{definition}
An implicit partial operation $V^{I} \rightsquigarrow V$ with domain $D\rightarrowtail V^{I}$ is \emph{continuous} if the functor $D$ preserves limits.
\end{definition}

Continuous implicit partial operations are called \emph{limit-closed} in~\cite{Hebert1998}. The following observation appeared under slightly different assumptions in Lemma~11 of \emph{op.~cit.} It is essentially a consequence of Freyd's General Adjoint Functor Theorem.

\begin{lemma}\label{l:subfunctor-of-representable}
Let $\cat$ be a complete, well-powered and well-copowered category. A subfunctor of a representable functor $\cat \to \Set$ is representable if, and only if, it preserves limits.
\end{lemma}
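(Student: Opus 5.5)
One direction is immediate: representable functors $\cat(A,-)$ preserve all limits, so a representable subfunctor preserves limits. The substance is the converse. Let $D\rightarrowtail \cat(B,-)$ be a subfunctor that preserves limits. Since $\cat$ is complete and locally small, $D$ is representable precisely when it has a left adjoint in the appropriate sense. Concretely, it suffices that the category of elements $\int D$ has an initial object. To see this, observe that an initial object $(A,a)$ with $a\in D(A)$ gives, by Yoneda, a natural transformation $\cat(A,-)\to D$ that is bijective on each component. I will obtain this initial object via Freyd's General Adjoint Functor Theorem, in the form of the Initial Object Theorem. That theorem says a complete, locally small category with a small weakly initial set has an initial object.

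Completeness of $\int D$ is routine. Given a small diagram $(X_k,x_k)$ in $\int D$, form the limit $X=\lim X_k$ in $\cat$. Because $D$ preserves limits, $D(X)\cong\lim D(X_k)$, so the compatible family $(x_k)$ determines a unique $x\in D(X)$. Then $(X,x)$ is a limit in $\int D$.

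The main step is to produce the solution set, and this is where well-copoweredness and the subfunctor hypothesis enter. Take an element $(X,x)$ with $x\in D(X)\subseteq\cat(B,X)$, so that $x$ is an arrow $B\to X$. I would factor this arrow as an epimorphism followed by something I can control. Since $\cat$ is complete and well-powered, it has (epi, extremal mono) factorisations: intersect all subobjects of $X$ through which $x$ factors. This gives $x=m\circ e$ with $e\colon B\to E$ an epimorphism and $m\colon E\rightarrowtail X$ a monomorphism. It remains to check that $e\in D(E)$. Naturality of the inclusion $D\subseteq\cat(B,-)$ alone does not give this, because it only yields $D(m)(e')=m\circ e'$ for elements $e'$ already in $D(E)$. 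Instead I use that $D$ preserves limits, and hence preserves monomorphisms, or more precisely pullbacks. A monomorphism $m$ is characterised by its kernel pair being trivial, and $D$ preserves pullbacks. I then want to conclude that $D(E)$ is the preimage of $D(X)$ under $m\circ-$. For this I would express $E$ as a limit, namely the equaliser-type intersection used in the factorisation. Using that $D$ preserves that intersection, which is a wide pullback of subobjects each containing $x$, I obtain $e\in D(E)$. Verifying this membership precisely is the step I expect to be the main obstacle. The cleanest route is probably to take $m$ to be the least subobject through which $x$ factors, written as a wide pullback of monos $m_j$. Each factorisation of $x$ through $m_j$ has its factor in $D$; I would check this using the pullback of $m_j$ along itself, whose preservation by $D$ shows $D(m_j)$ reflects membership. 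Preservation of the wide pullback then gives $e\in D(E)$.

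Granting this, every $(X,x)$ receives a morphism $m\colon(E,e)\to(X,x)$ in $\int D$ with $e\colon B\to E$ an epimorphism. By well-copoweredness, the epimorphisms out of $B$ form, up to isomorphism, a small set. The corresponding pairs $(E,e)$ with $e\in D(E)$ therefore form a small weakly initial set in $\int D$. The Initial Object Theorem then yields an initial object, and hence $D$ is representable.
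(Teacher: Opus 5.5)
\paragraph{A genuine gap at the factorisation step.} Your overall architecture matches the paper's: reduce to Freyd's representability theorem (your initial-object formulation on $\int D$ is equivalent) and use the epimorphic quotients of $B$ as the solution set. The gap is exactly at the step you flagged, and the claim there is false, not merely unverified.

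If $m$ is the least subobject of $X$ through which $x$ factors, the epi part $e$ need not lie in $D(E)$. (Incidentally, this intersection gives an (extremal epi, mono) factorisation, not (epi, extremal mono).) Your justification does not work either. Preservation of the kernel pair of $m_j$ only says that $D(m_j)$ is injective, which is automatic for a subfunctor of $\cat(B,-)$. It says nothing about whether $D(U_j)$ is the preimage of $D(X)$ under $m_j\circ-$.

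Here is a counterexample. In commutative rings, let $\delta\colon\mathbb{Z}\to\mathbb{Q}$ be the canonical epimorphism, and let $D\subseteq\cat(\mathbb{Z},-)$ be the image of $-\circ\delta$. Then $D\cong\cat(\mathbb{Q},-)$ preserves limits, and $D(R)\neq\emptyset$ iff $R$ is a $\mathbb{Q}$-algebra. Take $X=\mathbb{Q}$ and $x=\delta$. The least subobject through which $x$ factors is $\delta$ itself, with $e=\id_{\mathbb{Z}}$, and $e\notin D(\mathbb{Z})$. Membership in $D$ is reflected along regular monos, by preservation of equalisers, but not along arbitrary monos.

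\paragraph{The missing idea.} Intersect only those subobjects $m_i\colon U_i\rightarrowtail X$ for which $x=m_i\circ y_i$ with $y_i\in D(U_i)$. This family contains $\id_X$ and is a set by well-poweredness. Preservation of wide pullbacks then puts the induced $y\colon B\to s$ into $D(s)$.

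The price is that $y$ being epic no longer comes from a general factorisation result, so you must prove it directly. Suppose $u\circ y=v\circ y$, and let $k\colon K\rightarrowtail s$ be the equaliser of $u$ and $v$. Since $D$ preserves equalisers, $y=k\circ q$ with $q\in D(K)$. Hence $m\circ k$ belongs to the family, minimality forces $k$ to be an isomorphism, and so $u=v$. This is precisely how the paper's proof proceeds.
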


\begin{proof}
For the non-trivial direction, let $X$ be an object of $\cat$, and $D\colon \cat\to\Set$ a limit-preserving subfunctor of $\yo(X)$. We must prove that $D$ is representable. By Freyd's Representability Theorem (see e.g.\ \cite[Theorem~4.6.15]{Riehl2016book}), it is enough to prove that the following \emph{solution set condition} is verified: there exists a set $S$ of objects of~$\cat$ such that, for any $c\in \cat$ and any $x\in D(c)$, there exist $s\in S$, $y\in D(s)$ and an arrow $f\colon s\to c$ such that $Df(y)=x$. In other words, any arrow $x\colon X\to c$ in $D(c)$ factors through some arrow $y\colon X\to s$ in $D(s)$, for some $s\in S$.

Let $S$ be a representative set for the epimorphic quotients of $X$; such a set exists because $\cat$ is well-copowered. Let $\mathcal{M}$ be the collection of subobjects $m_i\colon u_i \emb c$ such that $x$ factors as $x=m_i\circ y_i$ for some arrow $y_i\colon X\to u_i$ in $D(u_i)$. Note that~$\mathcal{M}$ is a set because $\cat$ is well-powered, and the arrows $y_i$ are necessarily unique.
Since $\cat$ is complete, we can consider the intersection of all subobjects in $\mathcal{M}$, thus obtaining a subobject $m\colon s\emb c$. The object $s$ is the wide pullback of the monomorphisms $m_i$, and the arrows $y_i$ induce a unique mediating arrow $y\colon X\to s$ such that $x = m\circ y$. Because $D$ preserves limits, in particular wide pullbacks, $D(s)$ is the wide pullback of the maps $D(m_i)$. As each $y_i$ belongs to $D(u_i)$, it follows that $y$ belongs to~$D(s)$.

To conclude the proof, we must prove that $y$ is an epimorphism. Suppose that $u,v$ are parallel arrows satisfying $u\circ y = v\circ y$, and let $e\colon E \emb s$ be their equaliser. The universal property of the latter entails the existence of a unique arrow $q\colon X\to E$ such that $y = e\circ q$. Because $D$ preserves equalisers and $y\in D(s)$, it follows that $q\in D(E)$. Since $x = m\circ e \circ q$, the subobject $m\circ e\colon E \emb c$ belongs to $\mathcal{M}$. Clearly, $m\circ e$ is below $m$ in the poset of subobjects of $c$, and so the two subobjects coincide because $m$ is the infimum of $\mathcal{M}$. Therefore, $e$ is an isomorphism and $u=v$.
\end{proof}

Theorem~\ref{t:ES-vs-implicit-operations}, combined with Lemma~\ref{l:subfunctor-of-representable}, yields a proof of item~\ref{i:ES-char} of Theorem~\ref{t:main}.

\begin{corollary}\label{cor:ES-vs-continuous-operations}
Let $\cat$ be a complete well-copowered category with an extremal projective generator~$G$, and let $V\coloneqq \cat(G,-)\colon\cat\to\Set$. The following are equivalent:
\begin{enumerate}[label=(\arabic*),series=implicitops]
\item\label{i:well-copowered-Cat-satisfies-ES} $\cat$ satisfies \ES~(equivalently, $\cat$ is balanced);
\item\label{i:continuous-admits-extension} every continuous implicit partial operation $V^{I} \rightsquigarrow V$ can be extended to a total one.
\end{enumerate}
\end{corollary}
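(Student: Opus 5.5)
We will derive Corollary~\ref{cor:ES-vs-continuous-operations} from Theorem~\ref{t:ES-vs-implicit-operations}. The plan is to show that, under the present hypotheses, the continuous implicit partial operations $V^{I}\rightsquigarrow V$ are exactly the representable ones. Then item~\ref{i:continuous-admits-extension} coincides with item~\ref{i:representable-admits-extension} of Theorem~\ref{t:ES-vs-implicit-operations}, and that theorem gives the equivalence with \ES.

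First, $V^{I}$ is itself representable. Since $G$ is an extremal generator, the copower $\coprod_{I}G$ exists, and
\[
V^{I}=\cat(G,-)^{I}\cong \cat\bigl(\textstyle\coprod_{I}G,-\bigr)=\yo\bigl(\textstyle\coprod_{I}G\bigr).
\]
Hence the domain $D\rightarrowtail V^{I}$ of any implicit partial operation is a subfunctor of a representable functor.

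Second, we check the hypotheses of Lemma~\ref{l:subfunctor-of-representable}. The category $\cat$ is complete and well-copowered by assumption. It is well-powered by Lemma~\ref{l:basic-props-complete-gen}\ref{i:well-powered}, since it is complete and admits an extremal generator. Lemma~\ref{l:subfunctor-of-representable} therefore says that $D$ is representable if and only if $D$ preserves limits. In other words, an implicit partial operation $V^{I}\rightsquigarrow V$ is representable exactly when it is continuous. The direction ``representable $\Rightarrow$ continuous'' is in any case immediate, because representable functors preserve limits.

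Substituting this equivalence into Theorem~\ref{t:ES-vs-implicit-operations} gives \ref{i:well-copowered-Cat-satisfies-ES} $\IFF$ \ref{i:continuous-admits-extension}. There is no real obstacle here. The only points that need care are that $V^{I}$ must be recognised as representable via the copower, and that well-poweredness is not assumed outright but comes from Lemma~\ref{l:basic-props-complete-gen}.
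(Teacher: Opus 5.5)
Your proposal is correct and takes the same route as the paper. You get well-poweredness from Lemma~\ref{l:basic-props-complete-gen}, then use Lemma~\ref{l:subfunctor-of-representable} to identify continuous implicit partial operations with representable ones, and conclude by Theorem~\ref{t:ES-vs-implicit-operations}. Your explicit observation that $V^{I}\cong\yo(\coprod_{I}G)$ spells out a step the paper leaves implicit.
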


\begin{proof}
The category $\cat$ is well-powered by Lemma~\ref{l:basic-props-complete-gen}\ref{i:well-powered}. Therefore, Lemma~\ref{l:subfunctor-of-representable} applies to show that continuous implicit partial operations coincide with representable ones. The statement then follows from Theorem~\ref{t:ES-vs-implicit-operations}.
\end{proof}

\section{\SES~and principal implicit partial operations}\label{s:implicit-SES}

\begin{definition}\label{def:principal-subfunctor}
Let $F\colon \cat \to\Set$ be a functor, $A$ an object of $\cat$, and $x$ an element of $F(A)$. The \emph{subfunctor $S_{x}$ of $F$ generated by $x$} is defined by
\[
S_{x}(B)\coloneqq \{F(h)(x)\mid h\colon A \to B\}
\]
for every object $B$ of $\cat$. 
A subfunctor of $F$ of the form $S_x$ is called \emph{principal}.
\end{definition}

\begin{remark}
Principal subfunctors of $F$ are precisely the images of natural transformations of the type $\yo(A)\to F$, for $A$ an object of $\cat$. Just observe that an element $x\in F(A)$ induces a morphism $\yo(A)\to F$ which sends $h\in \cat(A,B)$ to $F(h)(x)$. The smallest subfunctor of $F$ through which this morphism factors is precisely $S_{x}$.

If $F$ is of the form $\yo(X)$ for an object $X$ of $\cat$, then $S_{x}(B)$ consists of all arrows $X\to B$ that factor through $x\colon X\to A$. In this case, the principal subfunctors of $F$ are the images of natural transformations $\yo(A) \to\yo(X)$. 
\end{remark}

\begin{definition}
An implicit partial operation $V^{I} \rightsquigarrow V$ with domain $D\rightarrowtail V^{I}$ is \emph{principal} if $D$ is a principal subfunctor of $V^I$.
\end{definition}

The next result proves item~\ref{i:SES-char} of Theorem~\ref{t:main}.

\begin{theorem}\label{t:SES-vs-implicit-product-pres-operations}
Let $\cat$ be a complete category with an extremal projective generator~$G$, and let~$V\coloneqq\cat(G,-)\colon\cat\to\Set$.  The following statements are equivalent:
\begin{enumerate}[label=(\arabic*),series=implicitops-SES]
\item\label{i:Cat-satisfies-SES} $\cat$ satisfies \SES;
\item\label{i:prod-pres-admits-extension} every principal implicit partial operation $V^{I} \rightsquigarrow V$ can be extended to a total one.
\end{enumerate}
\end{theorem}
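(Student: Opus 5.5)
The plan is to translate principal implicit partial operations into a factorisation problem in $\cat$, and then prove the two implications directly. Throughout, $V^{I}\cong\yo(\coprod_I G)$.

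\emph{Translation.} A principal subfunctor of $V^I$ has the form $S_x$ for some arrow $x\colon \coprod_I G\to A$, with
\[
S_x(B)=\{k\circ x\mid k\colon A\to B\}.
\]
It is the image of $\yo(x)\colon \yo(A)\to\yo(\coprod_I G)$, so $\yo(A)\to S_x$ is an epimorphism in $\CP{\cat}$. Hence a natural transformation $f\colon S_x\to V$ is determined by the composite $\yo(A)\to V$, that is, by an arrow $\phi\colon G\to A$. This composite descends to $S_x$ precisely when $\phi$ satisfies the following condition:
\begin{equation*}
(\ast)\quad \text{for all } k,k'\colon A\to B,\ k\circ x=k'\circ x \text{ implies } k\circ\phi=k'\circ\phi .
\end{equation*}
By Yoneda, $f$ extends to a total operation $\yo(\psi)$ if and only if there is $\psi\colon G\to\coprod_I G$ with $x\circ\psi=\phi$. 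So item~(2) is equivalent to the following statement: for every $x\colon\coprod_I G\to A$ and every $\phi\colon G\to A$ satisfying $(\ast)$, $\phi$ factors through $x$. Verifying this correspondence carefully, in particular the naturality check that $(\ast)$ is exactly what is needed, is the step I expect to be the main obstacle. After it, the two implications are fairly formal.

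\emph{(1)$\Rightarrow$(2).} Factor $x=m\circ e$ with $e$ an extremal epi and $m$ a mono, using Lemma~\ref{l:basic-props-complete-gen}\ref{i:factorisations}. By \SES, $m$ is the equaliser of some pair $k,k'$. Then $k\circ x=k'\circ x$, so $(\ast)$ gives $k\circ\phi=k'\circ\phi$, and therefore $\phi=m\circ\phi'$ for some $\phi'$. Since $G$ is extremal projective and $e$ is an extremal epi, $\phi'=e\circ\psi$ for some $\psi\colon G\to\coprod_I G$. Thus $\phi=x\circ\psi$.

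\emph{(2)$\Rightarrow$(1).} Let $m\colon S\rightarrowtail A$ be a mono, and let $e\colon\coprod_J G\to S$ be the canonical extremal epi. Put $x\coloneqq m\circ e$.
\begin{enumerate}[label=(\alph*)]
\item Let $r\colon R\rightarrowtail A$ be the intersection of all equalisers of pairs $k,k'$ with $k\circ m=k'\circ m$. Although there is a proper class of such pairs, they yield only a set of subobjects, because $\cat$ is well-powered by Lemma~\ref{l:basic-props-complete-gen}\ref{i:well-powered}. The intersection exists by completeness.
\item Since $e$ is epi, a pair equalises $x$ if and only if it equalises $m$.
\item $r$ is a regular mono: an intersection of equalisers is the equaliser of the induced pair of maps into products.
\item We have $m=r\circ m'$ for some mono $m'$.
\end{enumerate}
Now take any $\phi\colon G\to R$. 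Every pair equalising $x$ equalises $m$, hence equalises $r$, hence equalises $r\circ\phi$. So $r\circ\phi$ satisfies $(\ast)$, and by (2) we get $r\circ\phi=x\circ\psi=r\circ m'\circ e\circ\psi$. Cancelling the mono $r$ shows that $\phi$ factors through $m'$. Since $G$ is an extremal generator, every arrow $G\to R$ factoring through $m'$ forces $m'$ to be an isomorphism. Therefore $m\cong r$ is regular.
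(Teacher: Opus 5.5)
Your proof is correct and follows essentially the paper's argument. The paper routes it through the intermediate condition that any $G\to B$ equalised by every pair agreeing on a mono $m$ must factor through $m$, using the same ingredients you use: the intersection of all equalisers of pairs agreeing on $m$ with the extremal generator, the principal subfunctor generated by $m\circ\epsilon_A$, and the (extremal epi, mono) factorisation with extremal projectivity; your up-front translation of (2) into ``every $\phi$ satisfying $(\ast)$ factors through $x$'' just packages the well-definedness and naturality checks the paper does inside each implication.
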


\begin{proof}
Fix an arbitrary monomorphism $m\colon A\to B$, and consider the class $\Gamma$ of pairs $(\alpha,\beta)$ of parallel arrows with domain $B$ such that $\alpha\circ m = \beta\circ m$. The equaliser of each such pair yields a regular subobject $E_{(\alpha,\beta)} \emb B$. Since $\cat$ is well-powered by Lemma~\ref{l:basic-props-complete-gen}\ref{i:well-powered}, the subobjects of the form $E_{(\alpha,\beta)}$ form a set. As $\cat$ is complete, the intersection 
\[
E\coloneqq \bigwedge{\{E_{(\alpha,\beta)}\mid (\alpha,\beta)\in\Gamma\}}
\] 
exists and is a regular subobject of $B$ (see e.g.\ \cite[\S 14.5.1]{Schubert1972}). Moreover, in the poset of subobjects of $B$, (the equivalence class of) $m$ is below $E$. In fact, $m$ is a regular monomorphism if, and only if, it coincides with $E$. Because $G$ is an extremal generator, this is equivalent to saying that every arrow $G\to E$ factors through $m$. 

It follows that item~\ref{i:Cat-satisfies-SES} is equivalent to the following:
\begin{enumerate}[label=(\arabic*),resume=implicitops-SES]
\item\label{i:SES-at-G} for any monomorphism $m\colon A\emb B$, any arrow $x\colon G \to B$ satisfying
\[
\forall \alpha,\beta \ (\alpha\circ m = \beta\circ m \ \Rightarrow \ \alpha\circ x = \beta\circ x)
\]
must factor through $m$.
\end{enumerate}
It remains to show that items~\ref{i:prod-pres-admits-extension} and~\ref{i:SES-at-G} are equivalent.

\ref{i:prod-pres-admits-extension} $\IMP$ \ref{i:SES-at-G}. Fix a monomorphism $m\colon A\to B$, and suppose that $x\colon G\to B$ satisfies $\alpha\circ x = \beta\circ x$ for every pair of parallel arrows $(\alpha,\beta)$ such that $\alpha\circ m = \beta\circ m$. We must show that $x$ factors through $m$. 
Write $\epsilon_{A}\colon \coprod_{I}{G}\to A$ for the canonical extremal epimorphism, where $I\coloneqq \cat(G,A)$, and let $D$ be the subfunctor of $V^{I}$ generated by $m\circ \epsilon_{A}\colon \coprod_{I}{G}\to B$. Define a natural transformation $\phi\colon D\to V$ as follows. For every object $K$ of $\cat$, the component of $\phi$ at $K$ is
\[
\phi_{K}\colon \{h\circ m\circ \epsilon_{A}\mid h\in \cat(B,K)\} \to \cat(G,K), \ \ \ \ h\circ m\circ \epsilon_{A}\mapsto h\circ x.
\]
To see that the functions $\phi_{K}$ are well-defined, suppose that $h_{0},h_{1}\in \cat(B,K)$ satisfy $h_{0}\circ m\circ \epsilon_{A}=h_{1}\circ m\circ \epsilon_{A}$. Since $\epsilon_{A}$ is an epimorphism, we get $h_{0}\circ m=h_{1}\circ m$, and therefore $h_{0}\circ x=h_{1}\circ x$. Hence, $\phi_{K}$ is well-defined. It is easy to see that the naturality condition is satisfied, yielding a natural transformation $\phi\colon D\to V$. The latter is a principal implicit partial operation $V^{I}\rightsquigarrow V$, and so there exists $\psi\colon V^{I}\to V$ making the following diagram commute.
\[\begin{tikzcd}
& D \arrow{dr}{\phi} & \\
V^{I} \arrow{rr}{\psi} \arrow[hookleftarrow]{ur} & & V
\end{tikzcd}\]
By the Yoneda Lemma, there is $p\colon G\to \coprod_{I}G$ such that $\psi=\yo(p)=-\circ p$. Thus,
\[
x = \phi_{B}(\id\circ m\circ \epsilon_{A}) = \psi_{B}(m\circ \epsilon_{A}) = m\circ \epsilon_{A}\circ p,
\]
showing that $x$ factors through $m$. 

\ref{i:SES-at-G} $\IMP$ \ref{i:prod-pres-admits-extension}. Let $f\colon V^{I}\rightsquigarrow V$ be an implicit partial operation with domain $d\colon D\rightarrowtail V^{I}$, and suppose there is an arrow $y\colon \coprod_{I}G \to B$ that generates $D$. By Lemma~\ref{l:basic-props-complete-gen}\ref{i:factorisations}, $y$ can be decomposed as an extremal epimorphism $e$ followed by a monomorphism $m$, as displayed below.
\[\begin{tikzcd}
\coprod_{I}{G} \arrow{rr}{y} \arrow{dr}[swap]{e} & & B \\
& A \arrow[rightarrowtail]{ur}[swap]{m}
\end{tikzcd}\]
Consider the arrow $x\coloneqq f_{B}(y)\colon G\to B$. We claim that $\alpha\circ x = \beta\circ x$ for every pair of parallel arrows $(\alpha,\beta)$ such that $\alpha\circ m = \beta\circ m$. Note that any pair $(\alpha,\beta)$ satisfying $\alpha\circ m = \beta\circ m$ must also satisfy $\alpha\circ y = \beta\circ y$. Therefore, using the naturality squares
\[\begin{tikzcd}
D(B) \arrow{r}{f_{B}} \arrow{d}[swap]{\gamma\circ -} & \cat(G,B) \arrow{d}{\gamma\circ -} \\
D(C) \arrow{r}{f_{C}} & \cat(G,C)
\end{tikzcd}\]
for $\gamma\in \{\alpha,\beta\}$, we get
\[
\alpha\circ x = f_{C}(\alpha\circ y) = f_{C}(\beta\circ y) = \beta\circ x.
\]
Item~\ref{i:SES-at-G} then entails that $x$ factors through $m$, i.e., there is $w\colon G\to A$ such that $m\circ w = x$. Since $e$ is an extremal epimorphism and $G$ is extremal projective, there exists $z\colon G\to \coprod_{I}{G}$ such that $e\circ z = w$. To see that the total implicit operation $\yo(z)\colon V^{I}\to V$ extends $f$, it suffices to show that $\yo(z)\circ d = f$. Since $D$ is generated by $y$, this reduces to showing that both sides of the equation send $y$ to the same element. In turn, this holds because
\[
\yo(z)_{B}(y) = y\circ z = m\circ e \circ z = m\circ w = x = f_{B}(y).\qedhere
\] 
\end{proof}

\section{Lfp categories with the amalgamation property}\label{s:lfps-with-AP}

In this section, we will show that in any locally finitely presentable category with the amalgamation property, properties \ES~and \SES~are both equivalent to the \emph{weak \ES~property}, as defined below (Theorem~\ref{t:WES-to-SES}). This result will be used in the next section to characterise balancedness in terms of finitary implicit partial operations.

To start with, recall that a category $\cat$ is said to have
\begin{enumerate}
\item[(AP)]\label{AP} the \emph{amalgamation property} if any pair of monomorphisms $\alpha,\beta$ with common domain can be completed to a commutative square as displayed below, with $\gamma, \delta$ monic.
\[\begin{tikzcd}
\phantom{\cdot} \arrow{r}{\alpha} \arrow{d}[swap]{\beta} & \phantom{\cdot} \arrow[dashed]{d}{\delta} \\
\phantom{\cdot} \arrow[dashed]{r}{\gamma} & \phantom{\cdot}
\end{tikzcd}\]
\end{enumerate}

\begin{remark}
If $\cat$ admits pushouts, then it has \AP~if, and only if, monomorphisms are stable under pushouts along monomorphisms. 
\end{remark}

Next, we review the basic definitions concerning locally finitely presentable categories; for a thorough treatment, see~\cite{ar94book}. An object $X$ of a category $\cat$ is \emph{finitely presentable} (respectively, \emph{finitely generated}) if the functor $\cat(X,-)\colon \cat\to\Set$ preserves directed colimits (respectively, directed colimits of monomorphisms).
\begin{definition}
A category $\cat$ is \emph{locally finitely presentable} (\emph{lfp}, for short) if it is cocomplete and admits a set $\sf G$ of finitely presentable objects such that every object of $\cat$ is a directed colimit of objects from $\sf G$.
\end{definition}

Every lfp category is complete and well-copowered (see e.g.\ \cite[Corollary~1.28 and Theorem~1.58]{ar94book}).

\begin{definition}\label{d:WES}
A category $\cat$ has the
\begin{enumerate}
\item[(\WES)]\label{WES} \emph{weak ES property} if every epimorphism between finitely generated objects of $\cat$ is an extremal epimorphism.
\end{enumerate}
\end{definition}

\begin{remark}\label{rm:WES-in-lfp}
In an lfp category, finitely generated objects are closed under extremal images (cf.\ \cite[Proposition~1.69]{ar94book}). Therefore, in that case, \WES~holds precisely when every epi-mono between finitely generated objects is an isomorphism.
\end{remark}

Clearly, $\SES\IMP\ES\IMP\WES$. The next result shows that, in any lfp category with the amalgamation property, ${\WES\IMP\SES}$, and so all three properties are equivalent.
 For prevarieties (classes of algebras closed under isomorphic copies, subalgebras, and Cartesian products) with the amalgamation property, this was established in~\cite[Theorem~1.3]{BMR2017}. 

\begin{theorem}\label{t:WES-to-SES}
In any lfp category with the amalgamation property, ${\WES\IMP\SES}$.
\end{theorem}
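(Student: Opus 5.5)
The plan is to show that, under \AP\ and \WES, every monomorphism $m\colon A\emb B$ coincides with its regular closure (the equaliser of the two pushout injections), by testing this closure against finitely presentable objects.

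Fix a monomorphism $m\colon A\emb B$. Form the pushout $P\coloneqq B+_{A}B$ with injections $u,v\colon B\to P$, and let $e\colon E\emb B$ be the equaliser of $u,v$. Every pair $(\alpha,\beta)$ with $\alpha\circ m=\beta\circ m$ factors through $(u,v)$. Hence $E$ is the intersection $\bigwedge E_{(\alpha,\beta)}$ used in the proof of Theorem~\ref{t:SES-vs-implicit-product-pres-operations}, and $m$ is regular if, and only if, $m\cong e$. An lfp category has a set $\mathsf G$ of finitely presentable objects forming a strong (extremal) generating set. So it suffices to show that every arrow $b\colon X\to E$ with $X\in\mathsf G$ factors through $A$. (The property that an extremal generator detects proper subobjects works equally well for a generating set.)

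The key step is to descend to finitely generated pieces. Write $A$ as the directed union of its finitely generated subobjects $A_{i}$. Write $B$ as the directed union of finitely generated subobjects $B_{j}$, chosen so that each $A_{i}$ is contained in some $B_{j}$. Then $P$ is the directed colimit of the pushouts $P_{ij}\coloneqq B_{j}+_{A_{i}}B_{j}$ over pairs with $A_{i}\le B_{j}$, since colimits commute with colimits. Given $b\colon X\to E$, the arrow $e\circ b$ factors through some $B_{j}$ because $X$ is finitely presentable. The two arrows $X\to P$ given by $u\circ e\circ b=v\circ e\circ b$ then agree. By finite presentability of $X$, they already agree after passing to some stage $P_{ij}$, with $B_{j}$ large enough to contain the image of $b$. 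Let $C\le B_{j}$ be the join of $A_{i}$ and the image of $b$. Then $C$ is finitely generated, being an extremal image of $A_{i}+X$, using Remark~\ref{rm:WES-in-lfp}.

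Next I show that $A_{i}\emb C$ is an epimorphism. This is where \AP\ enters. By the remark after the definition of \AP, monomorphisms are stable under pushout along monomorphisms. It follows that the canonical comparison $C+_{A_{i}}C\to B_{j}+_{A_{i}}B_{j}$ is monic, by composing two pushouts of the mono $C\emb B_{j}$ along monos. Therefore the two injections $C\rightrightarrows C+_{A_{i}}C$ agree on the image of $b$, since they do so after composing with a mono. The equaliser of these injections is then a subobject of $C$ containing both $A_{i}$ and the image of $b$, hence it is all of $C$. So the injections coincide, which means that $A_{i}\emb C$ is epi. This is an epi-mono between finitely generated objects, hence an isomorphism by \WES\ (Remark~\ref{rm:WES-in-lfp}). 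Consequently $b$ factors through $A_{i}\le A$, which is what we needed.

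I expect the main obstacle to be the compactness step: making precise that $P$ is the directed colimit of the $P_{ij}$, and that the equality $u\circ e\circ b=v\circ e\circ b$ is witnessed at a finite stage. One also has to check carefully that the comparison maps between pushouts are monic. That check needs \AP\ applied twice, through the two-step factorisation $C+_{A_i}C\to B_j+_{A_i}C\to B_j+_{A_i}B_j$.
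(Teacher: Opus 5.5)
Your argument is correct, and it takes a genuinely different route from the paper's proof.

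The paper organises the proof in three steps:
\begin{enumerate}
\item Under \AP, every monomorphism factors as an epimorphism followed by a regular monomorphism, in any category with the relevant pushouts and equalisers (Lemma~\ref{l:epi-reg-mono-for-monos}).
\item \SES\ reduces to monomorphisms between finitely generated objects, because directed colimits of regular monos are regular (Lemma~\ref{l:SES-to-finitely-generated-objects}).
\item For a mono with finitely generated domain $A$, the epi-mono part $s\colon A\to C$ of the factorisation is maximal among the finitely generated subobjects of $C$, and hence is an isomorphism.
\end{enumerate}

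You skip both lemmas. You work directly with an arbitrary mono, test the comparison $A\to E$ into its regular closure against finitely presentable objects, and use compactness to push the equation $u\circ e\circ b=v\circ e\circ b$ down to a finite stage $B_j+_{A_i}B_j$ of the cokernel pair. That presentation of $P$ as a directed colimit is sound: both projections from the poset of pairs $(i,j)$ with $A_i\le B_j$ are final, and pushouts commute with colimits.

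The essential use of \AP\ is the same in both proofs. For monos $A'\emb C'\emb B'$, the comparison from the cokernel pair of $A'\emb C'$ to that of $A'\emb B'$ is monic. In the paper this is the map $\rho'_1\circ\rho_0$; in your proof it is $C+_{A_i}C\to B_j+_{A_i}B_j$.

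What each approach buys:
\begin{itemize}
\item Your route is more self-contained within lfp categories. You never need that $A\to E$ is epic, nor the closure of regular monos under directed colimits. The price is that you rely on finite presentability, not just finite generation, of the test objects, and on commuting pushouts with directed colimits.
\item The paper's route isolates the coregular factorisation lemma, which holds well beyond the lfp setting. It uses finiteness only through the directedness of finitely generated subobjects.
\end{itemize}
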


We start by proving two lemmas, which will be used in the proof of Theorem~\ref{t:WES-to-SES}. The first one states that, in any category with enough limits and colimits, if \AP~holds, then every monomorphism has a ``coregular factorisation''.

\begin{lemma}\label{l:epi-reg-mono-for-monos}
	Let $\cat$ be a category with equalisers of pairs of monos and pushouts of spans of monos. If $\cat$ satisfies the amalgamation property, then every monomorphism factors as an epimorphism followed by a regular monomorphism.
	\end{lemma}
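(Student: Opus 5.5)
Given a monomorphism $m\colon A\emb B$, the plan is to take the pushout of $m$ along itself and then equalise the two coprojections. Concretely, form the pushout
\[\begin{tikzcd}
A \arrow[rightarrowtail]{r}{m} \arrow[rightarrowtail]{d}[swap]{m} & B \arrow{d}{\iota_{2}} \\
B \arrow{r}{\iota_{1}} & P
\end{tikzcd}\]
which exists because it is a pushout of a span of monos. By the amalgamation property, in the form of the Remark above (monos are stable under pushout along monos), both $\iota_{1}$ and $\iota_{2}$ are monomorphisms. Let $r\colon E\emb B$ be the equaliser of the pair $(\iota_{1},\iota_{2})$. Since $\iota_1,\iota_2$ are monos, this equaliser exists by hypothesis, and $r$ is regular by construction. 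Because $\iota_{1}\circ m=\iota_{2}\circ m$, the map $m$ factors uniquely as $m=r\circ e$ with $e\colon A\to E$. Since $m$ is monic, $e$ is monic as well, although we will not need this.

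It remains to show that $e$ is an epimorphism, and this is the main step. Let $u,v\colon E\to C$ satisfy $u\circ e=v\circ e$. We cannot push forward directly, because $u,v$ live on $E$ and not on $B$. My first attempt is to use a pushout again, this time of $r$ along itself. Since $r$ is mono, the pushout $B\sqcup_{E}B$ exists and, by AP, its coprojections $j_{1},j_{2}$ are monic. The factorisation $m=r\circ e$ then yields a comparison map $k\colon P\to B\sqcup_{E}B$ with $k\circ\iota_{1}=j_{1}$ and $k\circ\iota_{2}=j_{2}$.

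This approach runs into difficulty, because $u,v$ have arbitrary codomain $C$. I would therefore use a standard trick instead. It is a classical fact that the equaliser $E$ of the cokernel pair of $m$ is the largest subobject of $B$ through which every pair identified by $m$ also agrees; this is the dominion. Writing the pushout of $r$ along $u$ and along $v$ would require pushouts that are not along monos. To stay within the hypotheses, I reduce to monos as follows. First replace $C$ by the pushouts $Q_{u}$ of $r$ along $u$ and $Q_{v}$ of $r$ along $v$; these are not available in general, so this step also fails.

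The cleaner route is to avoid arbitrary test maps by showing instead that $e$ is epi using the coprojections alone. Since $r$ is the equaliser of $\iota_{1},\iota_{2}$, the square formed by $E$, $B$, $B$ and $P$ (with maps $r,r,\iota_{1},\iota_{2}$) commutes. Given $u,v\colon E\to C$ with $ue=ve$, I would amalgamate $r$ with the monomorphism $\langle u,v\rangle$ if it exists. Lacking products, this is the step I expect to be the real obstacle, and I would try to exploit that any map out of $E$ can be tested after postcomposition with monos obtained by amalgamation of $r$ with the monos in the (epi?) decomposition.
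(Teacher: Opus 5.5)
Your set-up is the right one: the cokernel pair $(\iota_1,\iota_2)$ of $m$, its equaliser $r\colon E\emb B$, and $m=r\circ e$. But the proposal never carries out the only non-trivial step, namely that $e$ is an epimorphism; each route you try stalls, as you note yourself. The missing idea is that no arbitrary test pair $u,v\colon E\to C$ is needed. The fact you set aside (``$e$ is monic, although we will not need this'') is exactly what makes this possible. Since $e$ is mono, its cokernel pair $c_1,c_2\colon E\to Q\coloneqq E\sqcup_A E$ exists by hypothesis, and $c_1,c_2$ are monic by AP. Moreover, $e$ is epi if, and only if, $c_1=c_2$, because any $u,v$ with $u\circ e=v\circ e$ factor as $u=w\circ c_1$, $v=w\circ c_2$. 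As $\iota_1\circ r\circ e=\iota_2\circ r\circ e$, there is a unique $t\colon Q\to P$ with $t\circ c_1=\iota_1\circ r$ and $t\circ c_2=\iota_2\circ r$. Since $r$ equalises $\iota_1,\iota_2$, this gives $t\circ c_1=t\circ c_2$, so it suffices to show that $t$ is monic.

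To see this, paste four pushouts in a $2\times 2$ grid:
the cokernel pair of $e$ in the top-left corner;
the pushouts of $r$ along $c_1$ and along $c_2$;
and the pushout of the two resulting monos out of $Q$.
Each is a pushout of a span of monos, so it exists and has monic legs. By the pasting lemma, the outer square is a pushout of $r\circ e=m$ along itself, hence is (up to isomorphism) $P$ with legs $\iota_1,\iota_2$. By uniqueness of mediating maps, $t$ coincides with the composite of two monic legs through the bottom-right square. Thus $t$ is monic, $c_1=c_2$, and $e$ is epi.

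By contrast, your pushout of $r$ along itself only shows $B\sqcup_E B\cong P$, and the dominion property concerns pairs of maps out of $B$. Neither controls maps out of $E$ that need not extend along $r$. That is why those attempts do not close the gap, and neither does the $\langle u,v\rangle$ idea, for which no products are available.
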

	
	\begin{proof}
	Let $f$ be a monomorphism in $\cat$. Consider the pushout square
	\[\begin{tikzcd}
	 \arrow{r}{f} \arrow{d}[swap]{f} & {} \arrow{d}{q_{1}} \\
	\arrow{r}[swap]{q_{2}} {} & {} 
	\end{tikzcd}\]
	and let $e$ be the equaliser of $q_{1}$ and $q_{2}$ (which exists because $q_{1}$ and $q_{2}$ are monomorphisms by virtue of the amalgamation property). That is, $e$ is the equaliser of the cokernel pair of~$f$.
	Since $q_1\circ f=q_2\circ f$, there is a unique morphism $s$ such that
	\[
	f=e\circ s.
	\]
	It remains to prove that $s$ is an epimorphism, for then the latter factorisation has the desired properties.

	Note that $s$ is a monomorphism because so is $f$, and thus the cokernel pair of~$s$, as displayed below, exists and consists of monomorphisms $r_1$ and $r_2$.
	\[\begin{tikzcd}
{} \arrow{r}{s} \arrow{d}[swap]{s} & {} \arrow{d}{r_{1}} \arrow[bend left=25]{ddr}[description]{q_{1}\circ e} & {} \\
\arrow{r}{r_{2}} \arrow[bend right=25]{drr}[description]{q_{2}\circ e} {} & {} \arrow[dashed]{dr}{t}  & {} \\
{} & {} & {} 
\end{tikzcd}\]
	Then $s$ is an epimorphism if, and only if, $r_1=r_2$. Since $q_{1}\circ e=q_{2}\circ e$, the universal property of the pushout yields a unique arrow~$t$ making the above diagram commute. Suppose for a moment that $t$ is a monomorphism. Then
	\[
	t\circ r_{1} = q_{1}\circ e = q_{2}\circ e = t\circ r_{2}
	\]
	implies $r_{1}=r_{2}$, and so $s$ is an epimorphism.

	To see that $t$ is monic, consider the diagram below. It consists of four pushout squares, which exist and consist of monomorphisms by the amalgamation property.
	\[\begin{tikzcd}
	{} \arrow{r}{s} \arrow{d}[swap]{s} & {} \arrow{r}{e} \arrow{d}{r_{1}} & {} \arrow{d} \\
	{} \arrow{r}{r_{2}} \arrow{d}[swap]{e} & {} \arrow{r} \arrow{d}  \arrow[dashed]{dr}[description]{t} & {} \arrow{d}  \\
	{} \arrow{r} & {} \arrow{r}  & {} 
	\end{tikzcd}\]
	In the outer square, which is a pushout by the pasting lemma, the left vertical arrow and the top horizontal one coincide with $f$. Hence, the right vertical arrow and the bottom horizontal one can be identified with $q_{1}$ and $q_{2}$, respectively. It follows by definition of the mediating morphism $t$ that the latter coincides with either composite in the bottom right pushout square. Therefore, $t$ is a monomorphism.
	\end{proof}

The second lemma allows us to reduce property \SES~to finitely generated objects.
\begin{lemma}\label{l:SES-to-finitely-generated-objects}
	In a locally finitely presentable category, \SES~holds if, and only if, every monomorphism between finitely generated objects is a regular mono.
	\end{lemma}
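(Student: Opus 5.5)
The direction ``$\SES$ implies every monomorphism between finitely generated objects is regular'' is trivial. For the converse, I will use the standard criterion that a monomorphism $m\colon A\emb B$ is regular if, and only if, it is the equaliser of its cokernel pair $q_{1},q_{2}\colon B\rightrightarrows B\sqcup_{A}B$. The pushout exists because lfp categories are cocomplete. Let $e\colon E\emb B$ be the equaliser of $q_{1},q_{2}$; then $m$ factors through $e$ via a monomorphism $s$. It suffices to show that $s$ is an isomorphism. Since the finitely presentable objects form a strong generating set in an lfp category, it is enough to prove the following: every arrow $x\colon P\to B$ with $P$ finitely presentable and $q_{1}\circ x=q_{2}\circ x$ factors through $m$.

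To obtain this, I first present the mono $m$ as a directed colimit of monos between finitely generated objects. In an lfp category every object is the directed union of its finitely generated subobjects, and finitely generated objects are closed under extremal images (Remark~\ref{rm:WES-in-lfp}). A finite join of finitely generated subobjects is finitely generated, being the extremal image of a finite coproduct. Consider the poset of pairs $(A',B')$ with $A'\le A$ and $B'\le B$ finitely generated subobjects and $m[A']\le B'$, ordered componentwise. It is directed by taking joins. Its colimit is $(A,B)$: the first components run through all finitely generated subobjects of $A$, and the second through all finitely generated subobjects of $B$. For each pair, $m$ restricts to a monomorphism $m'\colon A'\emb B'$ between finitely generated objects, which is regular by hypothesis. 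Because colimits commute with colimits, the cokernel pair $B\sqcup_{A}B$ is the directed colimit of the cokernel pairs $B'\sqcup_{A'}B'$, compatibly with the coprojections $q_{1}',q_{2}'$.

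Now take $x\colon P\to B$ with $P$ finitely presentable and $q_{1}\circ x=q_{2}\circ x$. Since $\cat(P,-)$ preserves directed colimits, $x$ factors through some $B'$ as $x'\colon P\to B'$. The arrows $q_{1}'\circ x'$ and $q_{2}'\circ x'$ become equal in the colimit $B\sqcup_{A}B$. By finite presentability of $P$, they are already equal at some later stage $(A'',B'')\ge (A',B')$, that is, $q_{1}''\circ x''=q_{2}''\circ x''$ in $B''\sqcup_{A''}B''$. Since $m''\colon A''\emb B''$ is regular, it is the equaliser of its cokernel pair, so $x''$ factors through $A''$. Composing with $A''\le A$ shows that $x$ factors through $m$, as required.

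The main obstacle is the bookkeeping in the directed-colimit step. One must check that the indexing poset is genuinely directed and that its colimit is exactly the arrow $m$, which relies on joins and extremal images of finitely generated subobjects remaining finitely generated. One must also check that cokernel pairs commute with this directed colimit and that equality of two maps out of a finitely presentable object in a directed colimit is witnessed at a finite stage. Both facts are standard properties of lfp categories; once they are in place, the argument above is routine.
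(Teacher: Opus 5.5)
Your proof is correct, but it is organised differently from the paper's.

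The paper works in two stages and uses \cite[Proposition~1.62]{ar94book} as a black box. First, a mono $m\colon W\emb X$ with $W$ finitely generated is regular. Indeed, $X$ is the directed colimit of its finitely generated subobjects, which form a directed diagram of regular monos by hypothesis. Colimit cocones of such diagrams consist of regular monos, and $m$ is one of the legs. Second, an arbitrary mono $f\colon X\emb Y$ is the mediating morphism out of the directed colimit of the finitely generated subobjects of $X$. The cocone in question consists of monos with finitely generated domain, which are regular by the first stage. So $f$ is regular by the second half of the cited proposition.

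You instead approximate the arrow $m$ itself, in the arrow category, by the directed system of its restrictions $A'\emb B'$. You then check the ``equaliser of the cokernel pair'' criterion directly. For this you use three facts: pushouts commute with directed colimits; finitely presentable objects detect factorisations and equalities through directed colimits; and finitely presentable objects form a strong generator. In effect, you reprove inline the regular-mono part of the cited proposition, in exactly the form needed here.

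What your route buys is a self-contained proof that shows which features of lfp categories are actually used. This includes the need for finitely presentable, not merely finitely generated, test objects $P$, since nothing guarantees that the transition maps between the cokernel pairs $B'\sqcup_{A'}B'$ are monic. The price is the bookkeeping with the poset of pairs. The paper's version is shorter and avoids that bookkeeping by handling the domain and codomain approximations separately.

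The one detail worth writing out is that both projections from your poset of pairs are cofinal. Each $A'$ occurs in the pair $(A',m[A'])$. Each $B'$ is dominated by $(A_{0},B_{0}\vee B')$ for any pair $(A_{0},B_{0})$. Together with directedness, this shows that the colimit of your system is indeed $m\colon A\emb B$.
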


	\begin{proof}
	For the non-trivial direction, suppose that $\cat$ is a locally finitely presentable category in which every monomorphism between finitely generated objects is regular. We start by proving the following fact.

	\begin{claim}\label{claim:fg-mono-reg}
	Every monomorphism with finitely generated domain is regular.
	\end{claim}
	\begin{proof}[Proof of Claim]
	Let $m\colon W\to X$ be a monomorphism with $W$ finitely generated. The object $X$ is the colimit of the complete diagram of finitely generated objects that admit a monomorphism into it. This diagram is directed and consists of monomorphisms, hence of regular monomorphisms. Every colimit cocone for this diagram consists of regular monomorphisms by \cite[Proposition~1.62(i)]{ar94book}; in particular, this applies to the canonical colimit cocone consisting of all monomorphisms into~$X$ with finitely generated domain. Since $m$ belongs to this cocone, we conclude that it is a regular monomorphism.
	\end{proof}
	
	Now, let $f\colon X\to Y$ be any monomorphism in $\cat$. Reasoning as in the proof of the previous claim, we can write $X$ as the colimit of a directed diagram of regular monomorphisms between finitely generated objects. The arrow $f$ induces a compatible cocone of monomorphisms with vertex $Y$ on this diagram. In fact, by Claim~\ref{claim:fg-mono-reg}, this cocone consists entirely of regular monomorphisms.  It follows from \cite[Proposition~1.62(ii)]{ar94book} that the unique mediating morphism $X\to Y$, which coincides with $f$, is a regular monomorphism.
	\end{proof}

Finally, we are in a position to prove Theorem~\ref{t:WES-to-SES}.

\begin{proof}[Proof of Theorem~\ref{t:WES-to-SES}]
By Lemma~\ref{l:SES-to-finitely-generated-objects}, it suffices to show that every monomorphism with finitely generated domain is regular.
Let $f\colon A\to B$ be a monomorphism with~$A$ finitely generated. By Lemma~\ref{l:epi-reg-mono-for-monos}, $f$ factors as an epimorphism followed by a regular monomorphism, say $f=e\circ s$ with $s\colon A \to C$. Note that $s$ is monic because so is $f$. We claim that the epi-mono $s$ is an isomorphism, and so $f$ is regular.

\begin{claim}
	Suppose that whenever $s$ factors as $A\to D\emb C$ with $D$ finitely generated, then $A\to D$ is an isomorphism. Then $s$ is an isomorphism.
	\end{claim}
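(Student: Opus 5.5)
Write $C$ as the directed colimit of its finitely generated subobjects, following the proof of Lemma~\ref{l:SES-to-finitely-generated-objects}. Since $\cat$ is lfp, the diagram of all monomorphisms $d_j\colon D_j\emb C$ with $D_j$ finitely generated is directed, and $C$ is its colimit with the $d_j$ as colimit cocone. Because $A$ is finitely generated and $s$ is a monomorphism, $\cat(A,-)$ preserves this directed colimit of monomorphisms. Hence $s$ factors through some $d_{j_0}$, say $s = d_{j_0}\circ s_{j_0}$, and then through every $d_j$ with $j\geq j_0$, via $s_j\colon A\to D_j$.

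By hypothesis, each such $s_j$ is an isomorphism. For $j\le k$ in the cofinal part $\{j\mid j\ge j_0\}$, the transition monomorphism $D_j\emb D_k$ satisfies $(D_j\emb D_k)\circ s_j = s_k$. Since $s_j$ and $s_k$ are isomorphisms, the transition map equals $s_k\circ s_j^{-1}$ and is itself an isomorphism.

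The restricted diagram is therefore a directed diagram of isomorphisms, and it is cofinal in the original diagram. Its colimit is isomorphic to any of its objects: the colimit injection $D_{j_0}\to C$ is an isomorphism, namely $d_{j_0}$, since cofinal restrictions preserve colimits. Then $s = d_{j_0}\circ s_{j_0}$ is a composite of isomorphisms, as required.

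The only delicate point is the factorisation step. We must know that $C$ is the directed union of its finitely generated subobjects (\cite[Proposition~1.69 and Theorem~1.70]{ar94book}), and that a map from a finitely generated object into such a directed union of monos factors through a stage. The latter is exactly the defining property of finite generation, applied to the element $s\in\cat(A,C)\cong\colim_j\cat(A,D_j)$.
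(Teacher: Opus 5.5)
Your proof is correct and follows essentially the same route as the paper. Both write $C$ as the directed colimit of its finitely generated subobjects and use the hypothesis to make the diagram stationary from $s$ onwards, so that the colimit is attained at $s$; the paper phrases this as $s$ being maximal, hence the maximum, in the directed poset of finitely generated subobjects. One small simplification: $s$ is itself a monomorphism with finitely generated domain, so it already belongs to the diagram, and you can take $s_{j_0}=\id_A$ without invoking preservation of directed colimits of monomorphisms.
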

	\begin{proof}[Proof of Claim]
	Let $P$ be the poset of finitely generated subobjects of $C$. The assumption of the claim means that $s$ is maximal in $P$. The diagram~$\Gamma$ of finitely generated subobjects of $C$ is directed (cf.\ e.g.\ the proof of \cite[Theorem~1.70]{ar94book}), so $P$ is directed. Therefore, $s$ is the maximum of $P$. Since $C$ is the colimit of the diagram $\Gamma$, it follows that $s$ is an isomorphism.
	\end{proof}
	With the aim of applying the previous claim, consider a commutative triangle
	\[\begin{tikzcd}
	A \arrow[rightarrowtail]{rr}{s} \arrow{dr}[swap]{n} & & C \\
	& D \arrow[rightarrowtail]{ur}[swap]{k} &
	\end{tikzcd}\]
	with $D$ finitely generated. We must prove that $n$ is an isomorphism. Note that $n$ is a monomorphism because so is $s$. Therefore, because $\cat$ satisfies \WES, it suffices to prove that $n$ is epic (cf.\ Remark~\ref{rm:WES-in-lfp}), i.e.\ its cokernel pair $(r_{0},r_{1})$ satisfies $r_{0}=r_{1}$.
	Consider the following four pushout squares.
	\[\begin{tikzcd}
	{} \arrow{r}{n} \arrow{d}[swap]{n} & {} \arrow{r}{k} \arrow{d}{r_{0}} & {} \arrow{d}{r'_{0}} \\
	{} \arrow{r}[swap]{r_{1}} \arrow{d}[swap]{k} & {} \arrow{r}[swap]{\rho_{0}} \arrow{d}{\rho_{1}}  & {} \arrow{d}{\rho'_{1}}  \\
	{} \arrow{r}[swap]{r'_{1}} & {} \arrow{r}[swap]{\rho'_{0}}  & {} 
	\end{tikzcd}\]
	Since $s=k\circ n$ is epic its cokernel pair is trivial, i.e.\ $\rho'_{1}\circ r'_{0} = \rho'_{0}\circ r'_{1}$. It follows that 
	\[
	\rho'_{1}\circ r'_{0}\circ k = \rho'_{0}\circ r'_{1}\circ k
	\]
	and so 
	\[ 
	\rho'_{1}\circ \rho_{0}\circ r_{0} = \rho'_{0}\circ \rho_{1}\circ r_{1}.
	\]
	But $\rho'_{1}\circ \rho_{0}=\rho'_{0}\circ \rho_{1}$ is monic by the amalgamation property, therefore $r_{0}=r_{1}$.
\end{proof}

\section{Finitary implicit partial operations}\label{s:finitary-implicit-partial-operations}

The implicit partial operations featured in Theorem~\ref{t:main} have arbitrary (unbounded) arity. Theorem~\ref{t:SES-vs-implicit-operations-with-amalgam} below shows that if, in addition, $\cat$ is locally finitely presentable and satisfies the amalgamation property, then we can restrict ourselves to \emph{finitary} implicit partial operations $V^{I} \rightsquigarrow V$, meaning that $I$ is a finite set. These are denoted by $V^{n} \rightsquigarrow V$. This result relies on the fact that properties \WES, \ES, and \SES~are equivalent in lfp categories with the amalgamation property (Theorem~\ref{t:WES-to-SES}).

\begin{theorem}\label{t:SES-vs-implicit-operations-with-amalgam}
Let $\cat$ be an lfp category satisfying \AP~and admitting an extremal projective generator~$G$, and let $V\coloneqq \cat(G,-)$. The following are equivalent:
\begin{enumerate}[label=(\arabic*)]
\item\label{i:Cat-satisfies-SES-with-AP} $\cat$ satisfies \ES~(equivalently, by Theorem~\ref{t:WES-to-SES}, \WES~or \SES);
\item\label{i:continuous-finitary-admits-extension} every continuous finitary implicit partial operation $V^{n} \rightsquigarrow V$ can be extended to a total one.
\end{enumerate}
\end{theorem}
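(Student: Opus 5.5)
The plan is to reduce to Corollary~\ref{cor:ES-vs-continuous-operations} and Theorem~\ref{t:WES-to-SES}, using that lfp categories are complete, well-copowered and well-powered. By Lemma~\ref{l:subfunctor-of-representable}, a continuous finitary implicit partial operation $V^{n}\rightsquigarrow V$ has a representable domain. Since $V^{n}\cong\yo(\coprod_{n}G)$, this domain is $\yo(\delta)\colon\yo(A)\rightarrowtail\yo(\coprod_n G)$ for an epimorphism $\delta\colon\coprod_{n}G\to A$.

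For \ref{i:Cat-satisfies-SES-with-AP}$\IMP$\ref{i:continuous-finitary-admits-extension}, I apply Corollary~\ref{cor:ES-vs-continuous-operations} with $I$ finite, so this direction is immediate.

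For \ref{i:continuous-finitary-admits-extension}$\IMP$\ref{i:Cat-satisfies-SES-with-AP}, by Theorem~\ref{t:WES-to-SES} it suffices to prove \WES. By Remark~\ref{rm:WES-in-lfp}, this means showing that every epi-mono $s\colon X\to Y$ between finitely generated objects is an isomorphism.

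\begin{enumerate}[label=\textbf{Step \arabic*.},leftmargin=*]
\item I first show that $G$ is finitely generated, hence so is $\coprod_n G$. The canonical extremal epi $\coprod_{\cat(G,G)}G\to G$ splits, because $G$ is extremal projective and the identity lifts. This exhibits $G$ as a retract of a copower, but that alone does not give finite generation. Instead, write $G$ as the directed union of its finitely generated subobjects. The extremal generator condition together with projectivity should show that some finitely generated subobject admits a map from $G$ through which $\id_G$ factors. Hence $G$ is a retract of a finitely generated object, and so is finitely generated. I expect this to be a subtle point; if it fails, I would instead argue with finitely generated quotients of $\coprod_n G$ directly.

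\item Since $Y$ is finitely generated and $G$ is an extremal generator, there is an extremal epimorphism $\epsilon\colon\coprod_{n}G\to Y$ for some finite $n$. To get this, take the directed union of the images of the finite subcopowers of $\coprod_{\cat(G,Y)}G\to Y$. This union equals $Y$, so by finite generation $Y$ is already one of these images.

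\item Using projectivity of $G$, lift $\epsilon$ along the epimorphism $s$ to get $\delta\colon\coprod_n G\to X$ with $s\circ\delta=\epsilon$. This needs $s$ to be extremal, which is circular. So instead I start from $X$: choose an extremal epi $\delta\colon\coprod_n G\to X$ as in Step~2. Then $s\circ\delta$ is an epimorphism with domain $\coprod_n G$, and it is extremal if and only if $s$ is an isomorphism.

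\item Apply the Claim inside the proof of Theorem~\ref{t:ES-vs-implicit-operations} with $I=n$ to the epimorphism $s\circ\delta$. The relevant representable domain $\yo(s\circ\delta)$ is continuous, and each operation on it is finitary. By \ref{i:continuous-finitary-admits-extension} every such operation extends, so the Claim gives that $s\circ\delta$ is extremal. Hence the mono $s$ through which it factors is an isomorphism.
\end{enumerate}

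The main obstacle is Step~1 and Step~2, namely ensuring that finitely generated objects are extremal quotients of finite copowers of $G$.
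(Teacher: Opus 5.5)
Apart from Step~1, your argument is essentially the paper's proof. The paper also writes a finitely generated object as the directed union of the images $M_J$ of the finite subcopowers $\coprod_J G$. The induced map from this directed colimit is monic by \cite[Proposition~1.62]{ar94book}, and the canonical extremal epimorphism $\coprod_{\cat(G,A)}G\to A$ factors through it, so the object is already one of the $M_J$. This gives an extremal epimorphism from a finite copower of $G$. The paper then applies Claim~\ref{claim:ext-epi-iff-total} to the composite of this map with the given epimorphism; the resulting domain is representable, hence continuous, so item (2) applies. The paper concludes that every epimorphism with finitely generated domain is extremal. You instead argue via epi-monos between finitely generated objects and Remark~\ref{rm:WES-in-lfp}. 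The two formulations are equivalent.

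Step~1, however, is false, and you should delete it: $G$ need not be finitely generated. Take $\Set$, which is lfp and satisfies \AP. Every nonempty set is an extremal projective generator there, yet $\N$ is not finitely generated, because $\id_{\N}$ factors through no finite subset. In particular, $\id_G$ need not factor through a finitely generated subobject. Nothing downstream depends on Step~1. The directed-union argument of Step~2 only uses that the target object is finitely generated (this is $X$, after your correction in Step~3). Step~4 only uses that the arity $n$ is finite, not that $\coprod_{n}G$ is finitely generated. So the only real obstacle is Step~2, and your sketch of it is exactly the paper's construction.
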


\begin{proof}
The implication \ref{i:Cat-satisfies-SES-with-AP} $\IMP$ \ref{i:continuous-finitary-admits-extension} is an immediate consequence of Theorem~\ref{t:ES-vs-implicit-operations}.

For the converse implication, it suffices to show that every epimorphism $f\colon A\to B$ with finitely generated domain is an extremal epimorphism, as this entails \WES.
Suppose for a moment that there exists an epimorphism $g\colon \coprod_{n}G \to A$ for some finite set~$n$. Claim~\ref{claim:ext-epi-iff-total}, together with item~\ref{i:continuous-finitary-admits-extension}, entails that the epimorphism $f\circ g$ is an extremal epimorphism. Hence, $f$ is an extremal epimorphism too. 

To construct an epimorphism $g\colon \coprod_{n}G \to A$, we proceed as follows. Let $I$ be the set $\cat(G, A)$, and let $F$ be the set of all finite subsets of $I$. For each $J\in F$, write
\[
\epsilon_{J}\colon \coprod_{J}G\to A
\]
for the composite of the canonical extremal epimorphism $\epsilon \colon \coprod_{I}G \to A$ with the obvious ``restriction'' arrow $\coprod_{J}G \to \coprod_{I}G$ induced by the inclusion $J\subseteq I$. In view of Lemma~\ref{l:basic-props-complete-gen}\ref{i:factorisations}, we can decompose $\epsilon_{J}$ as an extremal epimorphism $e_{J}$ followed by a monomorphism $m_{J}$, as displayed below.
\[\begin{tikzcd}
\coprod_{J}G \arrow{rr}{\epsilon_{J}} \arrow{dr}[swap]{e_{J}} & & A \\
& M_{J} \arrow{ur}[swap]{m_{J}} & 
\end{tikzcd}\]
Consider the directed diagram $\Gamma$ in $\cat$ consisting of the objects of the form $M_{J}$, together with the arrows $h\colon M_{J}\to M_{K}$ such that $m_{K}\circ h = m_{J}$. As $m_{K}$ is a monomorphism, there is at most one such arrow $h$, and it is a monomorphism. If~$M$ denotes the colimit of $\Gamma$, the cocone $\{m_{J}\mid J\in F\}$ induces a mediating arrow
\[
m\colon M \to A.
\]

We claim that $m$ is an isomorphism. Since $m$ is a monomorphism by \cite[Proposition~1.62]{ar94book}, it suffices to show that it is an extremal epimorphism. Note that $\coprod_{I}G$ is the colimit of the diagram whose objects are of the form $\coprod_{J}G$, for $J\in F$, and whose morphisms $\coprod_{J}G \to \coprod_{K}G$ are those induced by inclusions $J\subseteq K$. It follows easily, using the universal property of the colimit $M$, that $\epsilon\colon \coprod_{I}G \to A$ factors through $m$. As $\epsilon$ is an extremal epimorphism, so is $m$. 

We have thus proved that $A$ is the colimit of the directed diagram of monomorphisms $\Gamma$. Because $A$ is finitely generated, the identity arrow $A\to A$ factors through a colimit map $m_{J}\colon M_{J}\rightarrowtail A$, for some $J\in F$, and so $m_{J}$ is an isomorphism. Therefore, $\epsilon_{J}\colon \coprod_{J}G\to A$ is an extremal epimorphism.
\end{proof}

\bibliographystyle{alphaurl-shortnames}

\end{document}

%% file: macros.tex
\newcommand{\N}{\mathbb{N}}				
\newcommand{\Z}{\mathbb{Z}}				

\renewcommand{\phi}{\varphi}
\renewcommand{\epsilon}{\varepsilon}

\newcommand{\cat}{{\sf C}}								
\newcommand{\Set}{{\sf Set}}							
\newcommand{\Top}{{\sf Top}}							
\newcommand{\Frm}{{\sf Frm}}							
\newcommand{\CABA}{{\sf CABA}}							
\newcommand{\CH}{{\sf CH}}							    
\newcommand{\Stone}{{\sf Stone}}					    
\newcommand{\CSLat}{{\sf CSLat}}						
\newcommand{\M}{{\sf M}}							    
\newcommand{\AGrp}{{\sf AbGrp}}							
\newcommand{\Alg}[1]{{\sf Alg}(#1)}						
\newcommand{\K}{{\sf K}}								
\newcommand{\CP}[1]{{#1}^{\text{\tiny$\vee$}}}			
\newcommand{\BA}{{\sf BA}}								

\newcommand{\yo}{\mathcal{Y}}

\newcommand{\id}{\mathrm{id}}								
\newcommand{\emb}{\rightarrowtail}							

\newcommand{\op}{{\mathrm{op}}}						
\newcommand{\colim}{\operatornamewithlimits{colim}}		

\newcommand{\IMP}{\Rightarrow}						
\newcommand{\IFF}{\Leftrightarrow}						

\newcommand{\ES}{\textnormal{ES}}
\newcommand{\WES}{\textnormal{WES}}
\newcommand{\SES}{\textnormal{SES}}
\newcommand{\AP}{\textnormal{AP}}

%% file: implicit_operations_v1.bbl
\begin{thebibliography}{CKM26}

\bibitem[AHS06]{ahs06book}
J.~Ad{\'a}mek, H.~Herrlich, and G.~Strecker.
\newblock {\em {Abstract and concrete categories: The joy of cats}}, volume~17.
\newblock Reprints in Theory and Applications of Categories, 2006.
\newblock Originally published by John Wiley and Sons, New York, 1990.

\bibitem[AR94]{ar94book}
J.~Ad{\'a}mek and J.~Rosick{\'y}.
\newblock {\em {Locally Presentable and Accessible Categories}}, volume 189 of
  {\em London Mathematical Society Lecture Notes Series}.
\newblock Cambridge University Press, 1994.
\newblock \href {https://doi.org/10.1017/CBO9780511600579}
  {\path{doi:10.1017/CBO9780511600579}}.

\bibitem[AT05]{AT2005}
J.~Ad\'amek and V.~Trnkov{\'a}.
\newblock Birkhoff's variety theorem with and without free algebras.
\newblock {\em Theory Appl. Categ.}, 14(18):424--450, 2005.
\newblock \href {https://doi.org/10.70930/tac/kxsbnt6z}
  {\path{doi:10.70930/tac/kxsbnt6z}}.

\bibitem[Bac74]{Bacsich74}
P.~D. Bacsich.
\newblock Model theory of epimorphisms.
\newblock {\em Canad. Math. Bull.}, 17(4):471--477, 1974.
\newblock \href {https://doi.org/10.4153/CMB-1974-083-6}
  {\path{doi:10.4153/CMB-1974-083-6}}.

\bibitem[BMR17]{BMR2017}
G.~Bezhanishvili, T.~Moraschini, and J.~G. Raftery.
\newblock Epimorphisms in varieties of residuated structures.
\newblock {\em Journal of Algebra}, 492:185--211, 2017.
\newblock \href {https://doi.org/10.1016/j.jalgebra.2017.08.023}
  {\path{doi:10.1016/j.jalgebra.2017.08.023}}.

\bibitem[Bor94]{Borceux1}
F.~Borceux.
\newblock {\em Handbook of Categorical Algebra: Volume 1, Basic Category
  Theory}.
\newblock Encyclopedia of Mathematics and its Applications. Cambridge
  University Press, 1994.

\bibitem[BP95]{BP1995}
M.~Barr and M.~C. Pedicchio.
\newblock {${\rm Top}^{\rm op}$} is a quasi-variety.
\newblock {\em Cahiers Topologie G\'eom. Diff\'erentielle Cat\'eg.},
  36(1):3--10, 1995.

\bibitem[CKM26]{CKM2026}
L.~Carai, M.~Kurtzhals, and T.~Moraschini.
\newblock The theory of implicit operations, 2026.
\newblock \href {https://arxiv.org/abs/2512.14326} {\path{arXiv:2512.14326}}.

\bibitem[DLR26]{DLR2026}
I.~Di~Liberti and L.~Reggio.
\newblock {B}eth companions of finitary essentially algebraic theories, 2026.
\newblock \href {https://arxiv.org/abs/2609.03601} {\path{arXiv:2609.03601}}.

\bibitem[Faw86]{Fawcett1986}
B.~Fawcett.
\newblock A categorical characterization of the four colour theorem.
\newblock {\em Canad. Math. Bull.}, 29(4):426--431, 1986.
\newblock \href {https://doi.org/10.4153/CMB-1986-067-8}
  {\path{doi:10.4153/CMB-1986-067-8}}.

\bibitem[Gai64]{Gaifman1964}
H.~Gaifman.
\newblock Infinite {B}oolean polynomials. {I}.
\newblock {\em Fund. Math.}, 54:229--250, 1964.
\newblock \href {https://doi.org/10.4064/fm-54-3-229-250}
  {\path{doi:10.4064/fm-54-3-229-250}}.

\bibitem[GU71]{GabrielUlmer1971}
P.~Gabriel and F.~Ulmer.
\newblock {\em Lokal pr{\"a}sentierbare Kategorien}, volume 221 of {\em Lecture
  Notes in Mathematics}.
\newblock Springer-Verlag, Berlin and Heidelberg, 1971.
\newblock \href {https://doi.org/10.1007/BFb0059396}
  {\path{doi:10.1007/BFb0059396}}.

\bibitem[Hal64]{Hales1964}
A.~W. Hales.
\newblock On the non-existence of free complete {B}oolean algebras.
\newblock {\em Fund. Math.}, 54:45--66, 1964.
\newblock \href {https://doi.org/10.4064/fm-54-1-45-66}
  {\path{doi:10.4064/fm-54-1-45-66}}.

\bibitem[H{\'e}b90]{Hebert1990}
M.~H{\'e}bert.
\newblock Sur le rang et la d\'efinissabilit\'e{} des op\'erations implicites
  dans les classes d'alg\`ebres.
\newblock {\em Colloq. Math.}, 58(2):175--188, 1990.
\newblock \href {https://doi.org/10.4064/cm-58-2-175-188}
  {\path{doi:10.4064/cm-58-2-175-188}}.

\bibitem[H{\'e}b93]{Hebert1993}
M.~H{\'e}bert.
\newblock Sur les op\'erations partielles implicites et leur relation avec la
  surjectivit\'e{} des \'epimorphismes.
\newblock {\em Canad. J. Math.}, 45(3):554--575, 1993.
\newblock \href {https://doi.org/10.4153/CJM-1993-029-3}
  {\path{doi:10.4153/CJM-1993-029-3}}.

\bibitem[H{\'e}b98]{Hebert1998}
M.~H{\'e}bert.
\newblock On generation and implicit partial operations in locally presentable
  categories.
\newblock {\em Appl. Categ. Structures}, 6(4):473--488, 1998.
\newblock \href {https://doi.org/10.1023/A:1008653513618}
  {\path{doi:10.1023/A:1008653513618}}.

\bibitem[HI67]{HI1967}
J.~M. Howie and J.~R. Isbell.
\newblock Epimorphisms and dominions. {II}.
\newblock {\em J. Algebra}, 6:7--21, 1967.
\newblock \href {https://doi.org/10.1016/0021-8693(67)90010-5}
  {\path{doi:10.1016/0021-8693(67)90010-5}}.

\bibitem[Hod80]{Hodges1980}
W.~Hodges.
\newblock Functorial uniform reducibility.
\newblock {\em Fundamenta Mathematicae}, 108(2):77--81, 1980.
\newblock \href {https://doi.org/10.4064/fm-108-2-77-81}
  {\path{doi:10.4064/fm-108-2-77-81}}.

\bibitem[Isb66]{Isbell1966}
J.~R. Isbell.
\newblock Epimorphisms and dominions.
\newblock In {\em Proc. {C}onf. {C}ategorical {A}lgebra ({L}a {J}olla,
  {C}alif., 1965)}, pages 232--246. Springer-Verlag New York, Inc., New York,
  1966.
\newblock \href {https://doi.org/10.1007/978-3-642-99902-4_9}
  {\path{doi:10.1007/978-3-642-99902-4_9}}.

\bibitem[Isb73]{Isbell1973}
J.~R. Isbell.
\newblock Functorial implicit operations.
\newblock {\em Israel Journal of Mathematics}, 15(2):185--188, 1973.
\newblock \href {https://doi.org/10.1007/BF02764604}
  {\path{doi:10.1007/BF02764604}}.

\bibitem[Isb83]{Isbell1983}
J.~Isbell.
\newblock Generic algebras.
\newblock {\em Trans. Amer. Math. Soc.}, 275(2):497--510, 1983.
\newblock \href {https://doi.org/10.2307/1999036} {\path{doi:10.2307/1999036}}.

\bibitem[Joh82]{Johnstone1982}
P.~T. Johnstone.
\newblock {\em Stone spaces}, volume~3 of {\em Cambridge Studies in Advanced
  Mathematics}.
\newblock Cambridge University Press, Cambridge, 1982.

\bibitem[Joh02]{Elephant1}
P.~T. Johnstone.
\newblock {\em Sketches of an elephant: a topos theory compendium. {V}ol. 1},
  volume~43 of {\em Oxford Logic Guides}.
\newblock The Clarendon Press, Oxford University Press, New York, 2002.

\bibitem[KG71]{KG1971}
J.~F. Kennison and D.~Gildenhuys.
\newblock Equational completion, model induced triples and pro-objects.
\newblock {\em J. Pure Appl. Algebra}, 1(4):317--346, 1971.
\newblock \href {https://doi.org/10.1016/0022-4049(71)90001-6}
  {\path{doi:10.1016/0022-4049(71)90001-6}}.

\bibitem[Law63]{Lawvere1963}
F.~W. Lawvere.
\newblock Functorial semantics of algebraic theories.
\newblock {\em Proc. Nat. Acad. Sci. U.S.A.}, 50:869--872, 1963.
\newblock \href {https://doi.org/10.1073/pnas.50.5.869}
  {\path{doi:10.1073/pnas.50.5.869}}.

\bibitem[Lin66]{Linton1966}
F.~E.~J. Linton.
\newblock Some aspects of equational categories.
\newblock In {\em Proc. {C}onf. {C}ategorical {A}lgebra ({L}a {J}olla,
  {C}alif., 1965)}, pages 84--94. Springer-Verlag New York, Inc., New York,
  1966.
\newblock \href {https://doi.org/10.1007/978-3-642-99902-4_3}
  {\path{doi:10.1007/978-3-642-99902-4_3}}.

\bibitem[Mal66]{Maltsev1966}
A.~I. Maltsev.
\newblock Several remarks on quasivarieties of algebraic systems.
\newblock {\em Algebra i Logika Sem.}, 5(3):3--9, 1966.

\bibitem[Man76]{Manes1976}
E.~G. Manes.
\newblock {\em Algebraic theories}.
\newblock Springer-Verlag, New York-Heidelberg, 1976.
\newblock Graduate Texts in Mathematics, No. 26.
\newblock \href {https://doi.org/10.1007/978-1-4612-9860-1}
  {\path{doi:10.1007/978-1-4612-9860-1}}.

\bibitem[PP12]{PP2012}
J.~Picado and A.~Pultr.
\newblock {\em Frames and locales. Topology without points}.
\newblock Frontiers in Mathematics. Birkh\"auser/Springer Basel AG, Basel,
  2012.
\newblock \href {https://doi.org/10.1007/978-3-0348-0154-6}
  {\path{doi:10.1007/978-3-0348-0154-6}}.

\bibitem[Rei82]{Reiterman1982}
J.~Reiterman.
\newblock The {B}irkhoff theorem for finite algebras.
\newblock {\em Algebra Universalis}, 14(1):1--10, 1982.
\newblock \href {https://doi.org/10.1007/BF02483902}
  {\path{doi:10.1007/BF02483902}}.

\bibitem[Rie16]{Riehl2016book}
E.~Riehl.
\newblock {\em Category theory in context}.
\newblock Aurora Dover Modern Math Originals. Dover Publications, Inc.,
  Mineola, NY, 2016.

\bibitem[RP81]{RP1981}
J.~Rosick\'y and L.~Pol\'ak.
\newblock Implicit operations on finite algebras.
\newblock In {\em Finite algebra and multiple-valued logic ({S}zeged, 1979)},
  volume~28 of {\em Colloq. Math. Soc. J\'anos Bolyai}, pages 653--668.
  North-Holland, Amsterdam-New York, 1981.

\bibitem[Sch72]{Schubert1972}
H.~Schubert.
\newblock {\em Categories}.
\newblock Springer-Verlag, New York-Heidelberg, 1972.
\newblock Translated from the German by Eva Gray.

\bibitem[Sto36]{Stone1936}
M.~H. Stone.
\newblock The theory of representations for {B}oolean algebras.
\newblock {\em Trans. Amer. Math. Soc.}, 40(1):37--111, 1936.
\newblock \href {https://doi.org/10.1090/S0002-9947-1936-1501865-8}
  {\path{doi:10.1090/S0002-9947-1936-1501865-8}}.

\bibitem[Sto68]{Storrer1968}
H.~H. Storrer.
\newblock Epimorphismen von kommutativen {R}ingen.
\newblock {\em Comment. Math. Helv.}, 43:378--401, 1968.
\newblock \href {https://doi.org/10.1007/BF02564404}
  {\path{doi:10.1007/BF02564404}}.

\end{thebibliography}
